\def\altdb{\vadjust{\vbox to 0pt{\vss\hbox{\kern \hsize
\quad{\dbend}}\kern\baselineskip\kern-10pt}}}
\newcommand\field[1]{\mathbb{#1}}
\newcommand\CC{\field{C}}
\newcommand\FF{\field{F}}
\newcommand\NN{\field{N}}
\newcommand\QQ{\field{Q}}
\newcommand\RR{\field{R}}
\newcommand\TT{\field{T}}
\newcommand\ZZ{\field{Z}}
\newcommand\Bb{\mathcal B}
\newcommand\Ff{\mathcal F}
\newcommand\Gg{G}
\newcommand\Uu{\mathcal U}
\newcommand\lsp{\operatorname{span}}
\newcommand\clsp{\operatorname{\overline{span\!}\,\,}}
\newcommand\supp{\operatorname{supp}}
\newcommand\Ind{\operatorname{Ind}}
\newcommand{\go}{\Gg^{(0)}}
\newcommand{\Bco}[2]{B^{\operatorname{co}}_{#1}(#2)}
\newcommand{\BcoG}{\Bco{*}{\Gg}}
\newcommand{\ag}{A(\Gg)}
\newcommand{\inv}{^{-1}}
\newcommand{\unit}{^{(0)}}
\newcommand{\cs}{\ensuremath{C^{*}}}
\newcommand{\Iso}{\operatorname{Iso}}
\newcommand{\End}{\operatorname{End}}
\theoremstyle{plain}
\newtheorem{theorem}{Theorem}[section]
\newtheorem*{theorem*}{Theorem}
\newtheorem*{prop*}{Proposition}
\newtheorem{cor}[theorem]{Corollary}
\newtheorem{lemma}[theorem]{Lemma}
\newtheorem{prop}[theorem]{Proposition}
\theoremstyle{remark}
\newtheorem{rmk}[theorem]{Remark}
\newtheorem{example}[theorem]{Example}
\newtheorem{examples}[theorem]{Examples}
\newtheorem{claim}[theorem]{Claim}
\theoremstyle{definition}
\newtheorem{dfn}[theorem]{Definition}
\title{Simplicity of algebras associated to \'etale groupoids}
\author{Jonathan Brown}
\address{Jonathan Brown\\
Mathematics Department\\
    Kansas State University\\
138 Cardwell Hall\\
Manhattan\\
Kansas 66506-2602\\
USA}
\email{brownjh@math.ksu.edu}
\author{Lisa Orloff Clark}
\address{Lisa Orloff Clark\\
    Department of Mathematics and Statistics\\
    University of Otago\\
    PO Box 56\\
    Dunedin 9054\\
    New Zealand}
\email{lclark@maths.otago.ac.nz}
\author{Cynthia Farthing}
\address{Cynthia Farthing\\
    Department of Mathematics\\
    University of Iowa\\
    14 MacLean Hall\\
    Iowa City\\
    Iowa 52242-1419\\
    USA}
\email{cynthia-farthing@uiowa.edu}
\author{Aidan Sims}
\address{Aidan  Sims\\
    School of Mathematics and Applied Statistics\\
    University of Wollongong\\
    NSW 2522\\
    Australia}
\email{asims@uow.edu.au}
\date{December 4, 2012}
\keywords{Groupoid; groupoid algebra; simple $C^*$-algebra; Leavitt path algebra}
\subjclass[2010]{46L05 (Primary), 16S99, 22A22}
\thanks{This research has been supported by the Australian Research Council.}
\begin{document}

\begin{abstract}
We prove that the full $C^*$-algebra of a second-countable, Hausdorff, \'etale, amenable
group\-oid is simple if and only if the groupoid is both topologically principal and
minimal. We also show that if $G$ has totally disconnected unit space, then the
complex $^*$-algebra of its inverse semigroup of compact open bisections, as introduced by Steinberg, is simple if and only if $G$ is both effective and minimal.
\end{abstract}

\maketitle


\section{Introduction}

Let $G$ be a groupoid which is \'etale in the sense that $r,s : G \to \go$ are local
homeomorphisms.   Complex algebras $A(G)$ associated to locally compact, Hausdorff,
\'etale groupoids $G$ with totally disconnected unit spaces were introduced in
\cite{Steinberg2010}.  There, Steinberg shows that $A(G)$ can be used to describe
inverse-semigroup algebras. These algebras, which we call Steinberg algebras, were also
examined in \cite{CFST} where they are shown to include the complex Kumjian-Pask algebras
of higher-rank graphs \cite{ArandaClarkEtAl:xx11}, and hence the complex Leavitt path
algebras of directed graphs \cite{AbrPino}.   In general, $A(G)$ is dense in $\cs(G)$,
the $\cs$-algebra associated to $G$. The criteria of \cite{RS07} which characterise
simplicity of a higher-rank graph $C^*$-algebra also characterise simplicity of the
associated Kumjian-Pask algebra \cite[Theorem~5.14]{ArandaClarkEtAl:xx11}. Encouraged by
this, we set out to investigate the simplicity of $A(G)$.

Translating from the higher-rank graph setting, we hoped to prove that $G$ is
\emph{topologically principal}\footnote{See Remark~\ref{rmk:terminology} regarding
terminology.} in the sense that the units with trivial isotropy are dense in the unit
space, and \emph{minimal} in the sense that the unit space has no nontrivial open
invariant subsets, if and only if $A(G)$ is simple.  Although the ``if" implication was
not known in the $\cs$-algebra setting, we hoped that in the situation of algebras, where
there are no continuity hypotheses to check when constructing representations, we could
adapt the ideas of \cite[Proposition~3.5]{RS07}. Our initial attempts to prove the result
failed.  We eventually realised that the natural necessary condition is not that $G$ be
topologically principal, instead it is that $G$ be \emph{effective}: every open subset of $G
\setminus \go$ contains an element $\gamma$ such that $r(\gamma)$ and $s(\gamma)$ are distinct. 
For if an \'etale groupoid $G$ with totally
disconnected unit space is not effective, then there exists a compact open set $B
\subseteq G \setminus \go$ consisting purely of isotropy on which the range and source
maps are homeomorphisms. It follows that $1_{r(B)} - 1_B$ belongs to $A(G)$ and vanishes
under a natural homomorphism from $A(G)$ to the algebra of endomorphisms of the free
complex module $\FF(\go)$ with basis $\go$ (see
Proposition~\ref{prp:pi_inj_implies_3.1}). That $G$ is effective is, in general, a
strictly weaker condition than that it is topologically principal (see Examples
\ref{ex:2ndctble ex}~and~\ref{ex:etale ex}), though they are equivalent in the
higher-rank graph setting.  We show that effectiveness, together with minimality, is
necessary and sufficient for simplicity of $A(G)$ (see Theorem~\ref{thm:alg_simple}).

It came as a surprise to discover that the arguments we had developed for $A(G)$ could be
adapted to give new results in the $C^*$-algebraic setting provided that $G$ is
second-countable; this amounts to restricting our attention to separable $\cs$-algebras.
In this setting we can also drop the requirement that $\go$ is totally disconnected. A
Baire-category argument \cite[Proposition~3.6]{Renault:IMSB08} shows that a
second-countable, Hausdorff and \'etale groupoid $G$ is effective if and only if it is
topologically principal. Combining all of this, we fill in the missing piece of the
simplicity puzzle for \'etale groupoid $\cs$-algebras. That is, we show that if $\cs(G)$
is simple, then $G$ must be topologically principal. Hence we are able to give necessary
and sufficient conditions for the simplicity of $\cs(G)$ as well. Though some parts of
what we have done can be found in the literature, we have taken pains to make our results
self-contained and to take the most elementary path possible. There are many classes of
$C^*$-algebras with \'etale groupoid models (see for example \cite{Dea95, ER07, FMY,
GiordanoPutnamEtAl:Crelle95, IonescuMuhly, KPR, QuiggSieben:JAMSSA99, Ren80,
Renault:IMSB08, Y1}), so we expect that our results will find numerous applications.

After a short preliminaries section, we describe in Section~\ref{sec:groupoid} a number
of equivalent conditions to a locally compact, Hausdorff, \'etale groupoid $G$ being
effective. We show that these equivalent conditions are formally weaker than $G$ being
topologically principal, but are equivalent to $G$ being topologically principal if $G$
is second-countable. We present our structure theorems for the Steinberg algebra $A(G)$
in Section~\ref{sec:algebra}. In Section~\ref{sec:Cstar} we prove $C^*$-algebraic
versions of these results. We choose to pay the price of more-technical statements in
order to describe how our techniques apply to non-amenable groupoids. In a short examples
section we indicate why our techniques cannot be adapted to characterise simplicity of
the reduced $C^*$-algebra of an \'etale groupoid and why our results do not extend
readily to twisted groupoid $C^*$-algebras. We also provide an example of a non-\'etale
groupoid in which every unit has infinite isotropy but no open set consists entirely of
isotropy. By changing the topology, we also construct an \'etale groupoid with totally
disconnected unit space (which is not second-countable) with the same property. We finish
by relating our results to those of Exel-Vershik \cite{EV06} and of Exel-Renault
\cite{ER07}.

\smallskip

\textbf{Acknowledgements.} Thanks to Iain Raeburn, Astrid an Huef, and Dana Williams for
a number of helpful conversations. Further thanks to Dana for his helpful and
constructive comments on a preprint of the paper. Thanks also to Alex Kumjian and Paul
Muhly for very helpful email correspondence.

\section{Preliminaries}

If $X$ is a topological space and $D \subseteq X$, then we shall write $D^\circ$ for the interior
$\bigcup\{U \subseteq D : U\text{ is open in }X\}$ of $D$.

A groupoid $G$ is a small category in which every morphism has an inverse.  When $G$ is
endowed with a topology under which the range, source, and composition maps are
continuous, $G$ is called a topological groupoid. We say $G$ is \emph{\'etale} if $r$ and
$s$ are local homeomorphisms. It then follows that $\go := \{\gamma\gamma^{-1} : \gamma
\in G\}$ is open in $G$.  If $G$ is Hausdorff, then $\go$ is also closed in $G$. For a
more detailed description of \'etale groupoids, see
\cite{Paterson:Groupoidsinversesemigroups99}.

A subset $B$ of $G$ such that $r$ and $s$ both restrict to homeomorphisms of $B$ is called a
\emph{bisection} of $G$. If $G$ is a locally compact, Hausdorff, \'etale groupoid, then there is a
base for the topology on $G$ consisting of open bisections with compact closure (we call such sets precompact in this paper). As demonstrated in \cite{CFST, Steinberg2010}, if $\go$ is totally disconnected and $G$ is locally compact, Hausdorff, and \'etale, then there is base for the topology on $G$ consisting of compact open bisections.

For  subsets $D, E$ of $\go$, define
\[
G_D:=\{\gamma\in G: s(\gamma)\in D\},\quad
G^E:=\{\gamma\in G: r(\gamma)\in E\}\quad\text{and}\quad
G_D^E := G^E\cap G_D.
\]
In a slight abuse of notation, for $u,v\in \go$ we denote $G_u := G_{\{u\}}$, $G^v := G^{\{v\}}$
and $G^v_u := G^v\cap G_u$. The \emph{isotropy group} at a unit $u$ of $G$ is the group
$G^u_u=\{\gamma \in G : r(\gamma) =s(\gamma) = u\}$. We say $u$ has trivial isotropy if
$G^u_u=\{u\}$. The \emph{isotropy subgroupoid} of a groupoid $G$ is $\Iso(G) := \bigcup_{u\in \go}
G^u_u$. Since $r$ and $s$ are continuous, the isotropy subgroupoid of $G$ is a closed subset of
$G$.

A subset $D$ of $\go$ is called \emph{invariant} if $s(\gamma) \in D \implies r(\gamma) \in D$ for
all $\gamma \in G$. Since $G$ contains inverses, this is equivalent to saying that $D= \{r(\gamma)
: s(\gamma) \in D\} = \{s(\gamma) : r(\gamma) \in D\}$; hence $G_D=G^D$, and $G_D$ is a groupoid
with unit space $D$. Also, $D$ is invariant if and only if its complement is invariant.

For subsets $S$ and $T$ of $G$, define $ST = \{\gamma\alpha : \gamma \in S, \alpha \in T,
\text{ and } s(\gamma) = r(\alpha)\}$.

\begin{dfn}
Let $G$ be a locally compact, Hausdorff groupoid. We  say that $G$ is \emph{topologically
principal} if $\big\{u \in \go : G^u_u = \{u\}\big\}$ is dense in $\go$. We  say that $G$
is \emph{minimal} if $\go$ has no nontrivial open invariant subsets. We say $G$ is
\emph{effective} if the interior of $\Iso(G) \setminus \go$ is empty.
\end{dfn}

\begin{rmk}\label{rmk:top prin}
To relate our later results to those of Thomsen \cite{Thomsen:AIF10}, we observe that a
Hausdorff, \'etale groupoid $G$ is topologically principal if and only if each open invariant subset of
$\go$ contains a point with trivial isotropy. To see this, note that the ``only if" implication is
trivial. So suppose that every open invariant set contains a point with trivial isotropy, and fix
an open subset $U$ of $\go$. Then $r(G_U)$ is an open invariant set, so contains a point $u$ with
trivial isotropy. Fix $\gamma \in G_U$ with $r(\gamma) = u$.  Since $G_{s(\gamma)}^{s(\gamma)} =
\gamma^{-1} G^u_u \gamma = \gamma^{-1} \{u\} \gamma = r(\gamma)$, we see that $s(\gamma)$ has
trivial isotropy.  That is, the set $U$ contains a point with trivial isotropy. So $G$ is
topologically principal.

It follows immediately from this that if a minimal groupoid $G$ has a unit with trivial isotropy
then it is topologically principal.
\end{rmk}

\begin{rmk}\label{rmk:terminology}
In groupoid literature, the condition which we are calling \emph{topologically principal}
has gone under this name and a number of others, including ``essentially free,''
``topologically free,'' and ``essentially principal."  We have chosen the one we believe
to be least open to misinterpretation: The usage of the term ``principal" for groupoids
with everywhere-trivial isotropy seems uncontroversial, so ``topologically principal" is
suggestive. Our choice also seems to match what Renault himself has settled on
\cite{Renault:IMSB08, Renault:xx09}.

Similarly, our usage of the terms \emph{minimal} and \emph{effective} seem to be standard
(see, for example, \cite[Definition~I.4.1]{Ren80} and
\cite[Definition~3.4]{Renault:IMSB08}) but are possibly not universal.
\end{rmk}

\section{Topologically Principal Groupoids}\label{sec:groupoid}

The following lemma establishes the equivalent conditions that we use in
Theorem~\ref{thm:alg_simple}  to characterise simplicity of $A(G)$.

\begin{lemma}\label{lem:equiv conditions}
Let $G$ be a locally compact, Hausdorff, \'etale groupoid. The following are equivalent:
\begin{enumerate}
\item\label{it:nonunit isotropy} $G$ is effective;
\item\label{it:isotropy formulation} the interior of $\Iso(G)$ is $\go$;
\item\label{it:3.1} for every nonempty open bisection $B \subseteq G\setminus \go$,
    there exists $\gamma \in B$ such that $s(\gamma) \not= r(\gamma)$;
\item\label{it:ClaireLem} for every compact $K\subseteq G\setminus \go$ and every
    nonempty open $U \subseteq \go$, there exists a nonempty open subset $V \subseteq
    U$ such that $V KV = \emptyset$.
\end{enumerate}
If $G$ is topologically principal, then $G$ is effective. If $G$ is second-countable and
effective, then $G$ is topologically principal.
\end{lemma}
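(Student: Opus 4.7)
The plan is to prove the equivalences among (1)--(3) by elementary topology, then the two-way equivalence of (1) and (4), and finally the two implications concerning topological principality. Since $\go$ is open in $G$ and contained in $\Iso(G)$, the interior of $\Iso(G)$ decomposes as $\go\cup(\Iso(G)\setminus\go)^\circ$, so (1)$\Leftrightarrow$(2). For (1)$\Leftrightarrow$(3), use that $G$ has a base of open bisections: if (3) fails, a nonempty open bisection $B\subseteq G\setminus\go$ on which $r(\gamma)=s(\gamma)$ for every $\gamma\in B$ sits inside $\Iso(G)\setminus\go$, contradicting (1); conversely any nonempty open subset of $\Iso(G)\setminus\go$ contains a nonempty open bisection witnessing the failure of (3).

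For (4)$\Rightarrow$(1) I argue the contrapositive: given a nonempty open bisection $B\subseteq\Iso(G)\setminus\go$, shrink to a precompact open sub-bisection $B_0\subseteq B$ and take $K=\overline{B_0}$ and $U=s(B_0)$; then any nonempty open $V\subseteq U$ contains some $u$ whose $s$-preimage $\gamma\in B_0$ satisfies $r(\gamma)=s(\gamma)=u\in V$, putting $\gamma$ in $VKV$. The harder direction (1)$\Rightarrow$(4) I would handle by covering the compact $K\subseteq G\setminus\go$ with finitely many open bisections $B_1,\ldots,B_n\subseteq G\setminus\go$ and, using local compactness, extracting compact $K_i\subseteq B_i$ with $K=\bigcup_i K_i$; then a nested sequence $U\supseteq V_1\supseteq\cdots\supseteq V_n$ of nonempty open sets with $V_iK_iV_i=\emptyset$ can be built inductively, and $V=V_n$ works. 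Each inductive step reduces to the single-bisection claim: given compact $K'\subseteq B$ and nonempty open $W\subseteq\go$, find nonempty open $V\subseteq W$ with $VK'V=\emptyset$. Either $W\not\subseteq s(K')$, in which case $V=W\setminus s(K')$ works since $s(K')$ is closed; or $W\subseteq s(K')\subseteq s(B)$, in which case applying (3) to the nonempty open bisection $\{\gamma\in B:s(\gamma)\in W\}\subseteq G\setminus\go$ produces $\gamma_0$ with $u_0:=s(\gamma_0)\in W$ and $r(\gamma_0)\neq u_0$; Hausdorffness and continuity of the partial homeomorphism $\sigma_B:=(r|_B)\circ(s|_B)^{-1}$ then yield an open $V\ni u_0$ with $V\subseteq W$ and $\sigma_B(V)\cap V=\emptyset$, so $VK'V=\emptyset$.

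Finally, topologically principal $\Rightarrow$ effective is direct, since otherwise a nonempty open bisection $B\subseteq\Iso(G)\setminus\go$ would make $s(B)$ a nonempty open set of units all with nontrivial isotropy. For the converse under second-countability, fix a countable base $\{B_n\}$ of open bisections for $G\setminus\go$ and let $F_n=\{u\in s(B_n):r(\gamma)=s(\gamma)=u\text{ for the unique }\gamma\in B_n\text{ above }u\}$; then $\{u\in\go:G_u^u\neq\{u\}\}=\bigcup_n F_n$. By (1)$\Leftrightarrow$(3) each $F_n$ has empty interior in $\go$, and since $F_n$ is closed in the open set $s(B_n)$, a short density argument upgrades this to empty interior of the closure $\overline{F_n}$ in $\go$. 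As $\go$ is locally compact Hausdorff, hence Baire, the countable union $\bigcup_n\overline{F_n}$ has empty interior, giving density of the trivial-isotropy units. I expect the main obstacle to be (1)$\Rightarrow$(4), with its finite-cover reduction, case split, and iterated shrinking; a secondary subtlety is the upgrade from empty interior of $F_n$ to empty interior of $\overline{F_n}$ before invoking Baire.
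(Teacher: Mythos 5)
Your proposal is correct, and it diverges from the paper's proof in two genuine ways. For the equivalences (1)$\Leftrightarrow$(2)$\Leftrightarrow$(3) and for (4)$\Rightarrow$(1) you do essentially what the paper does (the paper is slightly more explicit that regularity is what lets you shrink $B$ to a precompact sub-bisection whose \emph{closure} stays inside $G\setminus\go$, which you need so that $K=\overline{B_0}$ is a legitimate test set for (4); your ``precompact sub-bisection'' should be read that way). The first real difference is in (3)$\Rightarrow$(4): the paper covers $K$ by finitely many precompact open bisections, applies (3) \emph{once} to produce a single $\gamma\notin\Iso(G)$, and then builds $V$ in one shot as $U$ intersected with the sets $V_B$ for the bisections through $\gamma$ and with the closures $\overline{r(B')}$ of the remaining bisections excised. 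You instead write $K=\bigcup_i K_i$ with each $K_i$ compact inside a single bisection and shrink $V$ iteratively, invoking (3) afresh at each stage via the case split ($W\not\subseteq s(K_i)$ versus $W\subseteq s(K_i)$) and the separation of $u_0$ from $\sigma_{B_i}(u_0)$. This is more modular and avoids the bookkeeping with the set $\mathcal F$ and the excised closures, at the cost of $n$ applications of (3) rather than one; both arguments are sound, and your single-bisection claim is essentially the paper's Claim about separating $r$ and $s$ on a bisection. The second difference is the final two assertions: the paper simply cites Renault's Proposition~3.6, whereas you reprove it — countable base of open bisections for $G\setminus\go$, fixed-point sets $F_n$ of the partial homeomorphisms $\sigma_{B_n}$, the observation that each $F_n$ is closed in $s(B_n)$ with empty interior (by (3)), the upgrade to $\overline{F_n}$ being nowhere dense (any open subset of $\overline{F_n}$ meeting $s(B_n)$ would land in $F_n$, and one disjoint from $s(B_n)$ would lie in the boundary of an open set), and then Baire. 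That argument is correct and makes the lemma self-contained where the paper defers to the literature.
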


\begin{proof}[Proof of Lemma~\ref{lem:equiv conditions}]
Since $G$ is Hausdorff and \'etale , $\go$ is both open and closed in $G$. So the
interior $S^\circ$ of any subset $S$ of $G$ is equal to the disjoint union $(S \cap
\go)^\circ \cup (S \setminus \go)^\circ$. Thus~(\ref{it:isotropy formulation}) is
equivalent to~(\ref{it:nonunit isotropy}).

We have \mbox{$(\ref{it:nonunit isotropy})\implies(\ref{it:3.1})$} because open bisections are in
particular open sets. That $G$ is \'etale also implies that the collection of all open bisections
of $G$ form a base for the topology on $G$. In particular, every open set
contains an open bisection, giving \mbox{$(\ref{it:3.1})\implies(\ref{it:nonunit
isotropy})$}.

To see (\ref{it:ClaireLem})~implies~(\ref{it:3.1}), we prove the contrapositive. Suppose
that~(\ref{it:3.1}) does not hold, and fix an open bisection $B_0\subseteq G\setminus G\unit$ such
that $r(\gamma)=s(\gamma)$ for all $\gamma\in B_0$. That is, $B_0 \subseteq \Iso(G)$. By shrinking
if necessary, we may assume that $B_0$ is precompact.  Since $G$ is locally compact and Hausdorff, it is a regular topological space (that is, points can be separated from compact sets by disjoint open sets).  Thus, there is an open subset $B$ of $B_0$ whose closure $K$ is compact and
contained in $B_0$. Let $U=r(B)$, and fix a nonempty open subset $V$ of $U$.   Since $K \subseteq
\Iso(G)$, we have $VK = KV$, and in particular $VKV \not= \emptyset$. Hence~(\ref{it:ClaireLem})
does not hold.

To show that (\ref{it:3.1})~implies~(\ref{it:ClaireLem}), we begin with a claim.

\begin{claim} \label{clm r s disjoint}
Suppose that $B \subseteq G \setminus \go$ is an open bisection and that $\gamma \in B \setminus
\Iso(G)$. Then there is an open set $V \subseteq r(B)$ such that $\gamma \in VB$ and $s(VB) \cap V
= \emptyset$.
\end{claim}
\begin{proof}[Proof of Claim~\ref{clm r s disjoint}]
Since $r(\gamma)\neq s(\gamma)$ and $G$ is Hausdorff, there exist open neighbourhoods $W$ of
$r(\gamma)$ and $W'$ of $s(\gamma)$ such that $W\cap W'=\emptyset$.  Let $V := W \cap r(BW')$.
Notice that $r(\gamma) \in V$ so $V$ is not empty.  Then $\gamma \in VB$, and since $B$ is a
bisection, $s(VB) = s(WB \cap BW') \subseteq W'$ and hence is disjoint from $V \subseteq W$.
\hfil\penalty100\hbox{}\nobreak\hfill\hbox{\qed\ Claim~\ref{clm r s disjoint}}
\renewcommand{\qed}{}\end{proof}

Now suppose~(\ref{it:3.1}), and fix a compact $K\subseteq G\setminus \go$ and an open $U \subseteq
\go$. We construct a nonempty open set $V\subseteq U$ such that $VKV=\emptyset$. If $U$ is not a
subset of $r(K)$, then $V = U \setminus r(K)$ will suffice, so suppose that $U \subseteq r(K)$.
Because $G$ is regular and $\go$ is open, there is a base for the topology on $G \setminus \go$
consisting of precompact open bisections whose closures are themselves contained in open bisections
which do not intersect $\go$. Since $K$ is compact, we may cover $K$ by a finite set $\mathcal{B}$
of such precompact open bisections. For each $B\in\Bb$, fix an open bisection $C_B$ such that
$\overline{B}\subseteq C_B\subseteq G\setminus \go$. For each $B \in \Bb$ the set $UC_B$ is an open
bisection which does not intersect $\go$, and the $r(B)$ cover $U$ so at least one $U C_B$ is
nonempty. So~(\ref{it:3.1}) implies that there exists $\gamma \in \bigcup_{B \in \Bb} UC_B
\setminus \Iso(G)$. Let $\Ff := \{B \in \Bb : \gamma \in UC_B\}$. For each $B \in \Ff$,
Claim~\ref{clm r s disjoint} yields an open set $V_B \subseteq r(UC_B)$ such that $s(V_B C_B) \cap
V_B = \emptyset$. Let
\[
V := U \cap \Big(\bigcap\{V_B : B \in \Ff\}\Big)
    \setminus \Big(\bigcup\{\overline{r(B')} : B'\in \Bb \setminus \Ff\}\Big).
\]
Then $V$ is open by definition, and nonempty because it contains $r(\gamma)$.

Fix $\alpha \in VK$; we must show $s(\alpha)\notin V$. Since $\alpha\in K$ and $\Bb$ is
a cover of $K$, we have $\alpha\in B$ for some $B\in \Bb$.  Also, since $r(\alpha)\in
V$, we have $B\in \Ff$.  Hence
\[
s(\alpha) \in s(VB) \subseteq s(V_B B),
\]
and $s(V B) \cap V\subseteq s(V_B B) \cap V_B=\emptyset$. Therefore $s(\alpha) \not\in V$. Thus
(\ref{it:3.1})~implies~(\ref{it:ClaireLem}).

The final two statements follow from \cite[Proposition~3.6]{Renault:IMSB08} since every
locally compact Hausdorff space has the Baire property.
\end{proof}

\begin{rmk}
The final assertion of Lemma~\ref{lem:equiv conditions} need not hold if $G$ is not
second-countable (see Example~\ref{ex:etale ex}) or if $G$ is not \'etale (see
Example~\ref{ex:2ndctble ex}).
\end{rmk}

\section{Simplicity of Steinberg algebras}\label{sec:algebra}

In this section, we consider locally compact, Hausdorff, \'{e}tale groupoids with totally
disconnected unit spaces. This puts us in the setting of \cite{CFST}. For such a groupoid
$G$, let
\[
    A(G) := \lsp\{1_B : B \text{ is a compact open bisection}\}
\]
as in \cite{CFST}.  For $f,g \in A(G) \subseteq C_c(\Gg)$, define
\begin{gather}
    f^*(\gamma) = \overline{f(\gamma^{-1})} \label{eq:multiplication}; \text{ and}\\
    (f*g)(\gamma) = \sum_{r(\alpha) = r(\gamma)} f(\alpha) g(\alpha^{-1}\gamma)
        = \sum_{\alpha\beta = \gamma} f(\alpha) g(\beta).\label{eq:involution}
\end{gather}
Under these operations and pointwise addition and scalar multiplication, $A(G)$ is a
$^*$-subalgebra of $C_c(\Gg)$. It coincides with the complex inverse semigroup algebra
$\CC G$ introduced in \cite{Steinberg2010}.\footnote{We prefer the notation $A(G)$
because Steinberg's notation $\CC G$ suggests the free $\CC$-module with basis $G$, which
is substantially larger. To avoid clashing with Steinberg's notation, we use $\FF(W)$ for
the free complex module with basis $W$.} We call $A(G)$ the Steinberg algebra of $G$.

\begin{theorem} \label{thm:alg_simple}
Let $G$ be a locally compact, Hausdorff, \'etale groupoid such that $\go$ is totally
disconnected. Then $A(G)$ is simple if and only if $G$ is both effective and minimal.
\end{theorem}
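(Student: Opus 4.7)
I would split the ``only if'' direction into two cases, each by contrapositive. First, if $G$ is not minimal and $U \subsetneq \go$ is a proper nonempty open invariant subset, the subspace $I_U := \{f \in A(G) : \supp(f) \subseteq G_U\}$ is a nontrivial proper two-sided ideal: it is nonzero (it contains $1_V$ for any nonempty compact open $V \subseteq U$), proper (it misses $1_W$ for a compact open neighbourhood $W \subseteq \go$ of any $u \in \go \setminus U$), and invariance of $U$ forces $\supp(f*h) \cup \supp(h*f) \subseteq G_U$ whenever $\supp(f) \subseteq G_U$: if $f(\alpha)h(\beta) \neq 0$ then $\alpha \in G_U$ and $r(\beta) = s(\alpha) \in U$, so invariance gives $s(\beta) \in U$. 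Second, if $G$ is not effective, Lemma~\ref{lem:equiv conditions} yields a compact open bisection $B \subseteq \Iso(G) \setminus \go$; I would construct the natural homomorphism $\pi : A(G) \to \End_\CC(\FF(\go))$ given by $\pi(f)\delta_u := \sum_{\gamma \in G_u} f(\gamma)\,\delta_{r(\gamma)}$. Since $B$ is a bisection contained in $\Iso(G)$, $\pi(1_B) = \pi(1_{r(B)})$, so $\ker(\pi)$ contains the nonzero element $1_{r(B)} - 1_B$; and since $\pi(1_V)$ is the nonzero projection onto $\lsp\{\delta_v : v \in V\}$ for every nonempty compact open $V \subseteq \go$, $\ker(\pi)$ is a proper nontrivial ideal.

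For the converse, assume $G$ is effective and minimal and let $I$ be a nonzero ideal of $A(G)$. The principal obstacle is to exhibit a nonempty compact open $V \subseteq \go$ with $1_V \in I$. I would fix nonzero $f \in I$ and, using the standard normal form for $A(G)$, write $f = \sum_{i=1}^n c_i 1_{B_i}$ with $c_i \neq 0$ and $B_1, \ldots, B_n$ pairwise disjoint compact open bisections, each lying either in $\go$ or in $G \setminus \go$. Multiplying on the left by $1_{B_1^{-1}}$ gives
\[
g := 1_{B_1^{-1}} * f = c_1 1_{s(B_1)} + \sum_{i \neq 1} c_i 1_{B_1^{-1} B_i} \in I.
\]
For each $i \neq 1$, disjointness of $B_1$ and $B_i$ precludes the existence of $\alpha \in B_1^{-1}$ and $\beta \in B_i$ with $\alpha\beta \in \go$ (this would force $\alpha = \beta^{-1}$ and hence $\beta \in B_1 \cap B_i$), so $B_1^{-1} B_i \subseteq G \setminus \go$; hence $K := \bigcup_{i \neq 1} B_1^{-1} B_i$ is a compact subset of $G \setminus \go$. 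Applying Lemma~\ref{lem:equiv conditions}(\ref{it:ClaireLem}) to this $K$ and the nonempty open set $U := s(B_1)$ produces a nonempty open $V \subseteq s(B_1)$ with $VKV = \emptyset$, which may be shrunk to a nonempty compact open set using total disconnectedness of $\go$. Then $1_V * g * 1_V = c_1 1_V$ lies in $I$, so $1_V \in I$. This is the delicate step: the algebraic trick of multiplying by $1_{B_1^{-1}}$ manufactures a diagonal contribution even when $f$ may have no support on $\go$ to begin with, and effectiveness in the form (\ref{it:ClaireLem}) is then needed to annihilate the off-diagonal remainder.

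Finally, minimality upgrades $1_V \in I$ to $I = A(G)$; it suffices to show $1_C \in I$ for every compact open bisection $C$. Because the closure of any orbit $[u] := r(G_u)$ is invariant (local sections of $s$ let one lift a convergent net of units in $[u]$ to a convergent net in $G$), minimality forces every orbit to be dense, so $[u] \cap V \neq \emptyset$ for every $u \in s(C)$. For each such $u$ I would choose $\gamma_u \in G$ with $s(\gamma_u) = u$ and $r(\gamma_u) \in V$, and a compact open bisection $D_u \ni \gamma_u$ with $r(D_u) \subseteq V$; extract a finite subcover $s(D_1), \ldots, s(D_m)$ of the compact set $s(C)$; and refine this cover, using the Boolean algebra structure of compact open subsets of $\go$, to pairwise disjoint compact opens $E_i \subseteq s(D_i)$ with $\bigsqcup_i E_i \supseteq s(C)$. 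Setting $D_i' := D_i \cdot E_i$ yields compact open bisections with $s(D_i') = E_i$ and $r(D_i') \subseteq V$, and a direct calculation (using that $D_i'$ is a bisection and $r(D_i') \subseteq V$) gives $1_{(D_i')^{-1}} * 1_V * 1_{D_i'} = 1_{E_i}$. Therefore
\[
1_C = \sum_{i=1}^m 1_C * 1_{E_i} = \sum_{i=1}^m 1_C * 1_{(D_i')^{-1}} * 1_V * 1_{D_i'} \in I,
\]
which completes the plan.
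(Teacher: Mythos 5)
Your proof is correct, and while its overall architecture (an effectiveness obstruction via a representation on $\FF(\go)$, a minimality obstruction, a Cuntz--Krieger-type uniqueness step, and an upgrade from $1_V\in I$ to $I=A(G)$) matches the paper's, two of the components are argued by genuinely different means. For the uniqueness step, the paper's Lemma~\ref{lem:CKuniqueness} replaces $b$ by $c=b^**b$ and uses positivity of $c(u)=\sum_{\gamma\in G_u}|b(\gamma)|^2$ to guarantee that the diagonal part $c_0$ is nonzero, before invoking condition~(\ref{it:ClaireLem}) of Lemma~\ref{lem:equiv conditions}; you instead take a normal form $f=\sum c_i1_{B_i}$ over disjoint compact open bisections and left-multiply by $1_{B_1^{-1}}$ to manufacture the nonzero diagonal term $c_11_{s(B_1)}$, your observation that disjointness of $B_1$ and $B_i$ forces $B_1^{-1}B_i\subseteq G\setminus\go$ being exactly the point that makes this work. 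Your version is purely combinatorial and never uses the involution or positivity, so it would survive replacing $\CC$ by a general commutative coefficient ring, whereas the paper's is shorter and mirrors the $C^*$-algebraic argument. For the non-minimality obstruction, the paper realises the proper ideal as $\ker\pi_W$ for the module representation of Proposition~\ref{prp:module rep} with $W=\go\setminus U$, while you exhibit the ideal $I_U=\{f:\supp f\subseteq G_U\}$ directly and verify the ideal property from invariance of $U$; yours is more elementary, the paper's reuses machinery it needs anyway for Proposition~\ref{prp:pi_inj_implies_3.1}. Your treatment of effectiveness and your final step (orbit density, disjointification of the sets $s(D_i)$, and the identity $1_{(D_i')^{-1}}*1_V*1_{D_i'}=1_{E_i}$) coincide in substance with Proposition~\ref{prp:pi_inj_implies_3.1} and Proposition~\ref{prop:alg_nois_iff_ideal} respectively, the only cosmetic difference being that the paper gets orbit density from the open invariant set $s(G^U)$ rather than from invariance of orbit closures.
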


Our proof was guided by that of Theorem~5.14 in \cite{ArandaClarkEtAl:xx11}. However,
their arguments rely heavily on the underlying higher-rank graph structure so our approach looks
very different.  The first step is to prove that the Cuntz-Krieger uniqueness theorem for
$A(G)$ \cite[Theorem~5.2]{CFST} still holds if we replace the hypothesis that $G$ is
topologically principal with the hypothesis that $G$ is effective.

\begin{lemma}
\label{lem:CKuniqueness} Let $G$ be a locally compact, Hausdorff, \'etale groupoid with
totally disconnected unit space. Suppose that $G$ is effective and that $I$ is a
nontrivial ideal of $\ag$. Then there is a compact open subset $V \subseteq \go$ such
that $1_V \in I$.
\end{lemma}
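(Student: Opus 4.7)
The strategy is to produce an element of the form $c\cdot 1_V\in I$ for some nonzero scalar $c\in\CC$ and some compact open $V\subseteq\go$; then $1_V = c^{-1}(c\cdot 1_V)\in I$ and we are done. Starting from a nonzero $a\in I$, the first step is to arrange that the ``diagonal'' part $a_0:=a|_{\go}$ is nonzero. This is not automatic because $I$ is not assumed to be closed under $^*$. However, the two-sided ideal property together with $a^*\in\ag$ forces $a^**a\in I$, and a direct computation from the convolution formula gives
\[
(a^**a)(u)=\sum_{\alpha\in G_u}|a(\alpha)|^2 \quad\text{for all } u\in\go.
\]
Since $a\neq 0$, choosing any $\gamma$ with $a(\gamma)\neq 0$ and taking $u=s(\gamma)$ yields $(a^**a)(u)>0$. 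Thus, after replacing $a$ by $a^**a$, we may assume $a_0\neq 0$.

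Since $\go$ is both open and closed in $G$ and $a$ has compact support, both $a_0$ and $b:=a-a_0$ lie in $\ag$, with $\supp(b)$ compact and contained in $G\setminus\go$. Because $\go$ is totally disconnected and $a_0$ is locally constant with compact support, we can write $a_0=\sum_{j=1}^m c_j 1_{U_j}$ where the $U_j$ are pairwise disjoint compact open subsets of $\go$. Fix $j$ with $c_j\neq 0$ and set $U:=U_j$ and $K:=\supp(b)$. Applying Lemma~\ref{lem:equiv conditions}~(\ref{it:ClaireLem}) to this compact $K$ and this open $U$ produces a nonempty open $V\subseteq U$ with $VKV=\emptyset$; since $\go$ is totally disconnected, we may shrink $V$ so that it is compact open.

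The conclusion is then a short computation. Convolution of functions supported on $\go$ is pointwise multiplication, so $1_V*a_0*1_V=1_V\cdot a_0\cdot 1_V = c_j 1_V$, using $V\subseteq U_j$ together with disjointness of the $U_i$. On the other hand, $(1_V*b*1_V)(\gamma)$ is nonzero only for $\gamma\in VKV=\emptyset$, so $1_V*b*1_V=0$. Hence $1_V*a*1_V = c_j 1_V\in I$, and dividing by the nonzero scalar $c_j$ yields $1_V\in I$ as required. The real obstacle here is the opening diagonalization: without assuming $I=I^*$, one must somehow arrange for a nonzero contribution on the unit space, and the mild trick of passing to $a^**a$ is what allows us to do so while remaining inside the ideal. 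Once that is done, condition~(\ref{it:ClaireLem}) of Lemma~\ref{lem:equiv conditions} does the heavy lifting by killing the off-diagonal part through conjugation by $1_V$.
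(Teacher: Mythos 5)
Your proposal is correct and follows essentially the same route as the paper's proof: pass to $a^**a\in I$ to get a nonzero diagonal part, decompose it as a finite linear combination of indicators of disjoint compact open sets, and then use condition~(\ref{it:ClaireLem}) of Lemma~\ref{lem:equiv conditions} with $K=\supp(a-a_0)$ to find a compact open $V$ that kills the off-diagonal part under conjugation by $1_V$. The only cosmetic difference is that you single out one $U_j$ with $c_j\neq 0$ explicitly, whereas the paper's decomposition already has all coefficients nonzero; the substance is identical.
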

\begin{proof}
Fix $b \in I \setminus \{0\}$. Let $c := b^* * b$. For $u \in \go$ we have
\[
c(u) =\sum_{\gamma \in G_u} b^*(\gamma^{-1}) b(\gamma) = \sum_{\gamma \in G_u}
 \overline{b(\gamma)} b(\gamma) \ge \max_{\gamma \in G_u} |b(\gamma)|^2.
\]
In particular, the function
\[c_0 := \begin{cases}
c(\gamma)&\text{if } \gamma \in \go;\\0 & \text{otherwise}
        \end{cases}\]
is nonzero.  Because $\go$ is both open and closed, $c_0 \in A(G)$.

Using Lemma~3.6 of \cite{CFST} we may write
\[
    c_0 = \sum_{U \in \Uu} a_U 1_U,
\]
where $\Uu$ is a collection of mutually disjoint, nonempty compact open subsets of $\go$, and each
$a_U$ is nonzero. Let $K$ be the support of $c-c_0$.  Notice that $K \subseteq G \setminus \go$.

Fix $U \in \Uu$. Since $\Iso(G)^\circ = \go$, the implication \mbox{$(\ref{it:isotropy
formulation})\implies(\ref{it:ClaireLem})$} of Lemma~\ref{lem:equiv conditions} implies
that there exists a nonempty open set $V\subseteq U$ such that $VKV=\emptyset$. Since $G$
has a basis of compact open sets, we can assume $V$ is also compact.

For $\gamma\in G$ we have
\[
    (1_V (c - c_0) 1_V)(\gamma)=1_V(r(\gamma)) (c - c_0)(\gamma) 1_V(s(\gamma))=0.
\]
So $1_V c 1_V = 1_V c_0 1_V = a_{U} 1_V$. Hence $1_V \in I$.
\end{proof}

Another key ingredient in our proof of Theorem~\ref{thm:alg_simple} is the following
generalisation of the \emph{infinite-path representation} of a Kumjian-Pask algebra as defined on
page~9 of \cite{ArandaClarkEtAl:xx11}. In our setting, the infinite-path space becomes the unit
space of $G$. In fact, the construction of \cite{ArandaClarkEtAl:xx11}
works for any invariant subset $W$ of $\go$. Given such a set $W$, we write $\FF(W)$
for the free (complex) module with basis $W$. We use these representations to construct nontrivial
ideals of $A(G)$  when there exists either a nontrivial open invariant subset
of $\go$ or a nonempty open subset of $\Iso(G)\setminus\go$.

\begin{prop}\label{prp:module rep}
Let $G$ be a locally compact, Hausdorff, \'etale groupoid with totally disconnected unit space and
let $W$ be an invariant subset of $\go$.
\begin{enumerate}
\item \label{it:module rep 1} For every compact open bisection $B \subseteq G$, there is a
    unique function $f_B : \go \to \FF(W)$ that has support contained in $s(B)$ and satisfies
    $f_B(s(\gamma)) = r(\gamma)$ for all $\gamma \in B$.
\item \label{it:module re 2} There is a unique representation $\pi_W : A(G) \to
    \operatorname{End}(\FF(W))$ such that $\pi_W(1_B)u = f_B(u)$ for every compact open
    bisection $B$ and all $u \in W$.
\end{enumerate}
\end{prop}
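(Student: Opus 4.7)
For part~(1), the plan is to exploit that since $B$ is an open bisection, $s|_B : B \to s(B)$ is a homeomorphism. Adopting the convention that each unit $v \in \go$ is identified with the corresponding basis vector of $\FF(W)$ when $v \in W$ and with $0 \in \FF(W)$ otherwise, I would define $f_B(u) := r\bigl((s|_B)^{-1}(u)\bigr)$ for $u \in s(B)$ and $f_B(u) := 0$ for $u \notin s(B)$. Invariance of $W$ ensures that $r(\gamma) \in W$ iff $s(\gamma) \in W$, so the prescribed equation $f_B(s(\gamma)) = r(\gamma)$ holds consistently for every $\gamma \in B$. Uniqueness is immediate: the support condition forces the value off $s(B)$, and on $s(B)$ it is pinned down by the bijection $s|_B$.

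For part~(2), rather than defining $\pi_W$ from a chosen presentation $f = \sum a_i 1_{B_i}$ (which would require a separate well-definedness check), I would set
\[
\pi_W(f) u := \sum_{\gamma \in G_u} f(\gamma)\, r(\gamma), \qquad u \in W,
\]
using the same convention as above, and extend linearly to $\FF(W)$. Finiteness of the sum is easy: fixing any presentation $f = \sum_i a_i 1_{B_i}$ shows $\supp(f) \subseteq \bigcup_i B_i$, and each bisection meets $G_u$ in at most one point. Specialising to $f = 1_B$ immediately gives $\pi_W(1_B)u = f_B(u)$, and uniqueness of $\pi_W$ then follows because the $1_B$ span $A(G)$ and their action is prescribed on the basis $W$ of $\FF(W)$.

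The main task is the multiplicativity check $\pi_W(f*g) = \pi_W(f)\pi_W(g)$. Starting from the convolution formula~\eqref{eq:involution} and swapping summation order, I would reparametrise the pairs $(\alpha,\beta)$ with $\alpha\beta = \gamma \in G_u$ as pairs with $s(\beta) = u$ and $s(\alpha) = r(\beta)$ (noting that then $r(\gamma) = r(\alpha)$), obtaining
\[
\pi_W(f*g)u = \sum_{\beta \in G_u} g(\beta) \sum_{\alpha \in G_{r(\beta)}} f(\alpha)\, r(\alpha) = \sum_{\beta \in G_u} g(\beta)\, \pi_W(f)\bigl(r(\beta)\bigr).
\]
Invariance of $W$ guarantees $r(\beta) \in W$, so the inner expression is meaningful, and pulling $\pi_W(f)$ out by linearity of $\pi_W(f)$ as an endomorphism collapses the right-hand side to $\pi_W(f)\pi_W(g)u$. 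I do not anticipate a serious obstacle: the only point needing care is the bookkeeping in the change of summation variable, together with consistent use of the convention identifying units outside $W$ with $0 \in \FF(W)$, which is precisely what allows invariance of $W$ to play its role.
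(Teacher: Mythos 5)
Your proof is correct, and part~(1) matches the paper's argument (including the implicit convention that a unit outside $W$ is read as $0\in\FF(W)$, which the paper leaves tacit). For part~(2), however, you take a genuinely different route. The paper defines $t_B\in\End(\FF(W))$ as the linear extension of $f_B|_W$, verifies the inverse-semigroup relations $t_\emptyset=0$, $t_Bt_D=t_{BD}$, and $t_B+t_D=t_{B\cup D}$ for disjoint $B,D$ with $B\cup D$ a bisection, and then invokes the universal property of $A(G)$ (Theorem~3.11 of \cite{CFST}) to obtain $\pi_W$; the independence of $\pi_W(f)$ from the chosen presentation $f=\sum_i a_i1_{B_i}$ is thereby delegated to the cited theorem. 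You instead define $\pi_W(f)u=\sum_{\gamma\in G_u}f(\gamma)\,r(\gamma)$ directly on functions, so well-definedness is automatic, finiteness of the sum follows from covering $\supp(f)$ by finitely many compact open bisections, and the price is a direct multiplicativity check via the reindexing $\{(\alpha,\beta):\alpha\beta\in G_u\}\leftrightarrow\{(\beta,\alpha):\beta\in G_u,\ \alpha\in G_{r(\beta)}\}$, where invariance of $W$ ensures $r(\beta)\in W$ so that $\pi_W(f)$ can be applied to the basis vector $r(\beta)$. This is exactly the mechanism the paper itself uses later for $\pi_{[u]}$ in Proposition~\ref{prp:pi_u}, so your argument is both sound and more self-contained, at the cost of redoing a computation that the universal property packages away; the paper's route is shorter given the machinery of \cite{CFST} and makes the inverse-semigroup structure explicit.
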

\begin{proof}
Let $B$ be a compact, open bisection in $G$.  The formula $s(\gamma) \mapsto r(\gamma)$
for $\gamma$ in $B$ specifies a well-defined homeomorphism from $s(B)$ to $r(B)$.  Thus,
the function $f_B$ can be defined as stated in~(\ref{it:module rep 1}). To
prove~(\ref{it:module re 2}), first notice that the universal property of the free module
$\FF(W)$ implies that there is an element $t_B \in \operatorname{End}(\FF(W))$ extending
$f_B|_{W}$. Let $c : G \to \{e\}$ be the trivial cocycle. Then every bisection of $G$ is
$e$-graded under $c$, so the set $\BcoG$ of \cite[Definition~3.10]{CFST} is the set of
all compact open bisections of $G$. We claim that the collection $\{t_B : B \in \BcoG\}$
gives a representation of $\BcoG$ in $\operatorname{End}(\FF(W))$ as defined in
Definition~3.10 of \cite{CFST}.

To prove our claim, we must verify that:
\begin{enumerate}\renewcommand{\theenumi}{R\arabic{enumi}}
\item\label{it:zero} $t_\emptyset = 0$;
\item\label{it:multiplicative} $t_Bt_D = t_{BD}$ for all compact open bisections $B$ and $D$;
    and
\item\label{it:additive} $t_B + t_D = t_{B \cup D}$ whenever $B$ and $D$ are disjoint compact
    open bisections such that $B \cup D$ is a bisection.
\end{enumerate}
It is straightforward to check that each of these conditions holds for the functions $f_B$, and
hence for the endomorphisms $t_B$ as well.

Now, the universal property of $A(G)$, stated in Theorem 3.11 of \cite{CFST}, gives a
unique homomorphism $\pi_W:\ag \to \operatorname{End}(\FF(W))$ such that $\pi_W(1_B) =
t_B$ for all $B \in \BcoG$. The homomorphism $\pi_W$ is nonzero because $t_B$ is nonzero
whenever $s(B) \cap W \neq \emptyset$. It satisfies $\pi_W(1_B)u = f_B(u)$ because each
$t_B$ extends $f_B|_W$.
\end{proof}

\begin{prop}
\label{prp:pi_inj_implies_3.1} Let $G$ be a locally compact, Hausdorff, \'etale groupoid
with totally disconnected unit space, and let $\pi:= \pi_{\go} : A(G) \to \End(\FF(\go))$
be the homomorphism of Proposition~\ref{prp:module rep}. Then $\pi$ is injective if and
only if $G$ is effective.
\end{prop}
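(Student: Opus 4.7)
The plan is to treat the two implications separately. For the forward direction ($\pi$ injective implies $G$ effective) I argue the contrapositive: if $G$ is not effective, I will exhibit a nonzero element of $\ker\pi$. The interior of $\Iso(G)\setminus\go$ is nonempty, and since $\go$ is totally disconnected and $G$ is \'etale, $G$ has a basis of compact open bisections; so I can choose a nonempty compact open bisection $B\subseteq\Iso(G)\setminus\go$. Because every $\gamma\in B$ satisfies $s(\gamma)=r(\gamma)$, the set $s(B)=r(B)$ is a compact open subset of $\go$, so both $1_B$ and $1_{r(B)}$ belong to $A(G)$. Unpacking Proposition~\ref{prp:module rep}, for each $u\in\go$ the element $\pi(1_B)u=f_B(u)$ equals $u$ if $u\in s(B)=r(B)$ (since the unique $\gamma\in B$ with $s(\gamma)=u$ also satisfies $r(\gamma)=u$) and is $0$ otherwise, and the same formula describes $\pi(1_{r(B)})$. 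Hence $1_B-1_{r(B)}\in\ker\pi$, and it is nonzero since $B\cap r(B)=\emptyset$ (as $B\cap\go=\emptyset$ while $r(B)\subseteq\go$).

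For the converse, assume $G$ is effective and take $f\in\ker\pi$ with $f\ne 0$; I will derive a contradiction. Put $g:=f^**f$. Since $\pi$ is a homomorphism of algebras, $\pi(g)=\pi(f^*)\pi(f)=0$. Exactly as in the opening computation of the proof of Lemma~\ref{lem:CKuniqueness}, for each $u\in\go$ one has $g(u)=\sum_{\gamma\in G_u}|f(\gamma)|^2\geq\max_{\gamma\in G_u}|f(\gamma)|^2$, so choosing $u=s(\gamma_0)$ for any $\gamma_0\in\supp f$ yields $g(u)\ne 0$; hence $g_0:=g|_{\go}$ is nonzero, and it belongs to $A(G)$ because $\go$ is clopen in $G$. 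I now replay the tactic of Lemma~\ref{lem:CKuniqueness}: using Lemma~3.6 of \cite{CFST}, write $g_0=\sum_{U\in\Uu}a_U 1_U$ with $\Uu$ a finite collection of mutually disjoint nonempty compact open subsets of $\go$ and each $a_U\ne 0$; fix any $U_0\in\Uu$; set $K:=\supp(g-g_0)$, which is compact and contained in $G\setminus\go$; and invoke Lemma~\ref{lem:equiv conditions}(\ref{it:ClaireLem}) to produce a compact open $V\subseteq U_0$ with $VKV=\emptyset$.

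For $u\in V$, the coefficient of the basis vector $u$ in $\pi(g)u\in\FF(\go)$ is $\sum_{\gamma\in G^u_u}g(\gamma)$: the $\gamma=u$ term contributes $g_0(u)=a_{U_0}$, and any further contributing $\gamma$ would have to lie in $K\cap G^u_u\subseteq VKV=\emptyset$. So this coefficient equals $a_{U_0}\ne 0$, contradicting $\pi(g)u=0$. The main obstacle is that for a general $f\in\ker\pi$ the isotropy contributions from $\supp f\setminus\go$ could in principle cancel the unit-space contribution at every $u$, blocking a direct diagonal argument on $f$ itself; passing to $g=f^**f$ bypasses this because $g|_{\go}$ becomes a nonvanishing sum of squares, and effectiveness in the form of Lemma~\ref{lem:equiv conditions}(\ref{it:ClaireLem}) then buys exactly the room needed inside the supporting $U_0$ to kill every stray off-diagonal isotropy term.
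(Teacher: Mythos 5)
Your proof is correct and follows essentially the same route as the paper: the forward direction is the paper's argument verbatim (the nonzero element $1_B - 1_{s(B)}$ of the kernel coming from a compact open bisection $B\subseteq\Iso(G)\setminus\go$), and your converse simply inlines the paper's Lemma~\ref{lem:CKuniqueness} applied to $I=\ker\pi$ --- pass to $f^**f$, split off the unit-space part, and use condition~(\ref{it:ClaireLem}) of Lemma~\ref{lem:equiv conditions} to find $V$ with $VKV=\emptyset$ --- finishing with a diagonal-coefficient computation in $\FF(\go)$ in place of the paper's identity $1_V c 1_V = a_U 1_V$ inside the ideal. There are no gaps.
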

\begin{proof}
First suppose that $G$ is effective. Since $\pi(1_V) \not= 0$ for all compact open $V
\subseteq \go$, $\pi$ is injective by the contrapositive of Lemma~\ref{lem:CKuniqueness}
applied to $I = \ker(\pi)$.

Now suppose that $G$ is not effective. By \mbox{$(\ref{it:nonunit
isotropy})\iff(\ref{it:3.1})$} of Lemma~\ref{lem:equiv conditions}, there exists a
nonempty compact open bisection $B \subseteq G \setminus \go$ so that for every $\gamma
\in B$, $r(\gamma) = s(\gamma)$. Hence $B \neq s(B)$ but $f_B= f_{s(B)}$, where $f_B$ is
defined in Proposition~\ref{prp:module rep}(\ref{it:module rep 1}).  Thus
$\pi_{\go}(1_B)= \pi_{\go}(1_{s(B)})$ giving $1_B - 1_{s(B)} \in \ker(\pi_{\go})$. Since
$B \neq s(B)$ we have $1_B - 1_{s(B)} \not= 0$, so $\ker(\pi_{\go}) \neq \{0\}$.
\end{proof}

\begin{prop}
\label{prop:alg_nois_iff_ideal}  Let $G$ be a locally compact, Hausdorff, \'etale
groupoid with totally disconnected unit space. Then $G$ is minimal if and only if every
nonzero $f \in A(G)$ such that $\supp f \subseteq \go$ generates $A(G)$ as an ideal.
\end{prop}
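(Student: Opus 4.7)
The plan is to prove both directions by direct calculation in $A(G)$ using only the bisection calculus of \eqref{eq:multiplication}--\eqref{eq:involution} and \cite[Lemma~3.6]{CFST}.

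For the forward direction, assume $G$ is minimal and fix a nonzero $f \in A(G)$ with $\supp f \subseteq \go$. First, apply \cite[Lemma~3.6]{CFST} to write $f = \sum_{U \in \Uu} a_U 1_U$ with $\Uu$ a finite collection of mutually disjoint nonempty compact open subsets of $\go$ and each $a_U \neq 0$. Fix any $U_0 \in \Uu$; since the $U$'s are disjoint subsets of the unit space, $1_{U_0} * f * 1_{U_0} = a_{U_0} 1_{U_0}$, so $1_{U_0} \in \langle f \rangle$. By minimality, the nonempty open invariant set $r(G_{U_0})$ is all of $\go$, so for every $u \in \go$ I can find a compact open bisection $B$ with $u \in r(B)$ and $s(B) \subseteq U_0$, and a direct computation using \eqref{eq:involution} yields $1_B * 1_{U_0} * 1_{B^{-1}} = 1_{r(B)}$. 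Hence every such $1_{r(B)}$ lies in $\langle f \rangle$. For an arbitrary compact open bisection $D$, cover the compact set $r(D)$ by finitely many $r(B_1), \dots, r(B_n)$ of this form and disjointify by $E_i := (r(D) \cap r(B_i)) \setminus \bigcup_{j<i} r(B_j)$; each $E_i \subseteq r(B_i)$ is compact open, so $1_{E_i} = 1_{E_i} * 1_{r(B_i)} \in \langle f \rangle$, whence $1_{r(D)} = \sum_i 1_{E_i} \in \langle f \rangle$ and $1_D = 1_{r(D)} * 1_D \in \langle f \rangle$. Since such $1_D$'s span $A(G)$, this gives $\langle f \rangle = A(G)$.

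For the backward direction I argue by contraposition. If $G$ is not minimal, fix a proper nonempty open invariant subset $W \subseteq \go$ and set
\[
I_W := \{g \in A(G) : \supp g \subseteq G_W\}.
\]
A quick check of \eqref{eq:involution} shows $I_W$ is a two-sided ideal: if $h$ is supported in $G_W$, then any $\gamma = \alpha\beta$ in $\supp(g * h)$ satisfies $s(\gamma) = s(\beta) \in W$ automatically, while for $h * g$ one uses invariance (from $s(\alpha) \in W$ one gets $r(\beta) = s(\alpha) \in W$, and invariance forces $s(\beta) \in W$). Total disconnectedness of $\go$ combined with openness of $W$ supplies a nonempty compact open $V \subseteq W$, giving a nonzero $1_V \in I_W$ with $\supp 1_V \subseteq \go$. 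Picking $u \in \go \setminus W$ and a compact open neighbourhood $V'$ of $u$ in $\go$ produces $1_{V'} \notin I_W$, so $I_W \neq A(G)$, and hence $1_V$ fails to generate $A(G)$ as an ideal.

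The main obstacle will be the cover-and-reassemble step in the forward direction: translating minimality into an algebraic statement needs all three of the identity $1_B * 1_{U_0} * 1_{B^{-1}} = 1_{r(B)}$, a careful disjointification of the finite cover of $r(D)$ to avoid double counting, and the cutting identities $1_{E_i} * 1_{r(B_i)} = 1_{E_i}$ and $1_{r(D)} * 1_D = 1_D$ to ensure that each assembled piece stays inside $\langle f \rangle$.
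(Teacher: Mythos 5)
Your argument is correct in both directions. The forward direction is essentially the paper's: you extract a projection $1_{U_0}$ from $f$ (via the decomposition of \cite[Lemma~3.6]{CFST} rather than via local constancy, which amounts to the same thing), transport it around $\go$ using minimality and the identity $1_B * 1_{U_0} * 1_B^* = 1_{r(B)}$, and then cover and disjointify; the paper covers $K = r(\supp g)$ for an arbitrary $g \in A(G)$ and finishes with $g = 1_K * g$, where you cover $r(D)$ for each spanning generator $1_D$ and finish with $1_D = 1_{r(D)} * 1_D$ --- a cosmetic difference. The backward direction is where you genuinely diverge: the paper passes to the complementary closed invariant set $\go \setminus W$ and exhibits the kernel of the module representation $\pi_{\go\setminus W}$ of Proposition~\ref{prp:module rep} as a proper ideal, whereas you bypass representations entirely and verify by hand that $I_W = \{g \in A(G) : \supp g \subseteq G_W\}$ is a two-sided ideal (using invariance of $W$ for the product $h*g$), nonzero because it contains some $1_V$ and proper because it misses $1_{V'}$ for $V'$ a compact open neighbourhood of a point outside $W$. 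Your route is more elementary and self-contained, needing nothing beyond the convolution formula; the paper's choice lets it reuse the representation $\pi_W$ it has already built for Proposition~\ref{prp:pi_inj_implies_3.1}, packaging the ``restrict to an invariant set'' idea uniformly across both non-simplicity arguments. One small point worth recording in your ideal check: the containment $\supp(g*h) \subseteq G_W$ with $G_W$ open is automatic here only because elements of $A(G)$ are locally constant, so their supports are clopen and coincide with the sets where they are nonzero; for general continuous functions the closure in the definition of support would require an extra word.
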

\begin{proof}
Suppose $G$ is minimal. Fix $f \in A(G) \setminus \{0\}$ such that $\supp f \subseteq
\go$. Let $I$ be the ideal of $A(G)$ generated by $f$. Fix $g \in A(G)$; we must show
that $g \in I$. Since $f$ is nonzero and locally constant \cite[Lemma~3.4]{CFST}, there
exist $c \in \CC \setminus \{0\}$ and a compact open $U \subseteq \go$ so that $f|_U
\equiv c$. Then $1_U = \frac{1}{c} 1_U * f \in I$. Let $K := r(\supp(g)) \subseteq \go$.
Then $K$ is compact and open by \cite[Lemma~3.2]{CFST}. Since $s(G^U)$ is a nonempty open
invariant set, it is all of $\go$. Therefore $K \subseteq s(G^U)$. So for each $u \in K$,
there exists $\gamma_u$ with $r(\gamma_u) \in U$ and $s(\gamma_u) = u$. For each $u$, let
$B_u$ be a compact open bisection containing $\gamma_u$ such that $r(B_u) \subseteq U$
and $s(B_u) \subseteq K$. Then $1_{s(B_u)} = 1^*_{B_u} * 1_{U} * 1_{B_u}$ belongs to $I$.
Since $K$ is compact, there is a finite subset $\{v_1, \dots, v_n\}$ of $K$ such that
$\{s(B_{v_i}) : 1 \le i \le n\}$ covers $K$. By disjointification of the collection
$\{s(B_{v_i}) : 1 \le i \le n\}$ (see \cite[Remark~2.5]{CFST}), we may assume that the
$s(B_{v_i})$ are mutually disjoint. For each $i$, the function $k_i := 1_{s(B_{v_i})} \le
1_{s(B_u)}$ belongs to $I$, so $1_K = \sum^n_{i=1} k_i \in I$. Hence $g = 1_K * g \in I$.

Conversely, suppose $G$ is not minimal. Let $U$ be a nontrivial open invariant subset of $\go$.
Then the complement $W := \go\setminus U$ is itself an invariant subset of $\go$.  Let $\pi_{W} :
A(G) \to \operatorname{End}\FF(W)$ be the nonzero homomorphism of Proposition~\ref{prp:module rep}.
The kernel of $\pi_W$ is a proper ideal of $A(G)$. To complete the proof, it suffices to show that
$\ker(\pi_W) \not= \{0\}$. To see this, let $B \subseteq U$ be a compact open set. Then $1_B \in
\ker \pi_W \setminus \{0\}$.
\end{proof}

\begin{proof}[Proof of Theorem~\ref{thm:alg_simple}]
Suppose $A(G)$ is simple.  Then $\pi_{\go}$ is injective so
Proposition~\ref{prp:pi_inj_implies_3.1} implies that $G$ is effective. Since $A(G)$ is
simple, every function with support contained in $\go$ generates $A(G)$ as an ideal.
Hence, $G$ is minimal by Proposition~\ref{prop:alg_nois_iff_ideal}.

Conversely, suppose that $G$ is effective and minimal. Fix a nonzero ideal $I$ in $A(G)$.
Lemma~\ref{lem:CKuniqueness} implies that there is a compact open subset $V \subseteq
\go$ such that $1_V \in I$. Proposition~\ref{prop:alg_nois_iff_ideal} implies that the
ideal generated by $1_V$ is all of $A(G)$, so $I = A(G)$.
\end{proof}

\section{Simplicity of groupoid \texorpdfstring{$C^*$}{C*}-algebras}\label{sec:Cstar}

For details of the following, see, for example, \cite{Ren80} or
\cite{Paterson:Groupoidsinversesemigroups99}. Let $G$ be a second-countable, locally compact,
Hausdorff, \'etale groupoid. The formulas \eqref{eq:multiplication}~and~\eqref{eq:involution} for
convolution and involution on $\ag$ described in the preceding section also define a convolution
and involution on $C_c(G)$.  With these operations, and pointwise addition and scalar
multiplication, $C_c(G)$ is a complex $^*$-algebra. The $I$-norm on $C_c(G)$ defined by
\[
\|f\|_I = \sup_{u \in \go} \max\Big\{\sum_{\gamma \in G_u} |f(\gamma)|, \sum_{\gamma \in G^u} |f(\gamma)|\Big\}
\]
is a $^*$-algebra norm (see Proposition~II.1.4 of \cite{Ren80}) but not typically a
$C^*$-norm. The full norm on $C_c(G)$ is defined by
\[
    \|f\| := \sup\{\|\pi(f)\| : \text{$\pi$ is an $I$-norm-bounded $^*$-representation of $C_c(G)$}\},
\]
and $C^*(G)$ is defined to be the completion of $C_c(G)$ in the full norm.

There is a distinguished family of $I$-norm-bounded representations of $C_c(G)$, called
the regular representations; each is indexed by a $u\in\go$  and denoted $\Ind_u$.
Specifically, the regular representation $\Ind_u$  is the representation of $C_c(G)$ on
$\ell^2(G_u)$ implemented by convolution. That is, $\Ind_u(f)\delta_\gamma = \sum_{\beta
\in G^{r(\gamma)}} f(\beta^{-1}\gamma)\delta_\beta$. The reduced $C^*$-algebra $C^*_r(G)$
is the completion of $C_c(G)$ in the reduced norm $\|f\|_r = \sup_{u \in \go}
\|\Ind_u(f)\|$.  The reduced norm is dominated by the full norm, so $C^*_r(G)$ is a
quotient of $C^*(G)$.

We can now state our main theorem.

\begin{theorem}
\label{thm:main} Let $G$ be a second-countable, locally compact, Hausdorff, \'etale
groupoid. Then $C^*(G)$ is simple if and only if all of the following conditions are
satisfied:
\begin{enumerate}
\item\label{it:metrically amenable} $C^*(G) = C^*_r(G)$;
\item\label{it:discretely trivial} $G$ is topologically principal; and
\item\label{it:minimal} $G$ is minimal.
\end{enumerate}
\end{theorem}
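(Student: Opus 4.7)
The plan is to mirror Theorem~\ref{thm:alg_simple}, using the canonical conditional expectation $E : C^*_r(G) \to C_0(\go)$ (faithful on positive elements) as the $C^*$-analogue of the restriction $c \mapsto c_0$ that drove Lemma~\ref{lem:CKuniqueness}, and invoking second-countability through the final clause of Lemma~\ref{lem:equiv conditions} to trade effectiveness for topological principality.

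For the ``only if'' direction, assume $C^*(G)$ is simple. Condition~(\ref{it:metrically amenable}) is immediate: the canonical surjection $C^*(G) \to C^*_r(G)$ has nonzero image because $C_0(\go)$ embeds isometrically into $C^*_r(G)$, so simplicity forces this quotient map to be an isomorphism. For~(\ref{it:minimal}), if $U \subsetneq \go$ were a nontrivial open invariant subset with complement $W$, the restriction map $C_c(G) \to C_c(G_W)$ would extend to a surjective $^*$-homomorphism $C^*(G) \to C^*(G_W)$ with proper nonzero kernel, contradicting simplicity. For~(\ref{it:discretely trivial}), I would argue the contrapositive: if $G$ is not topologically principal then, by Lemma~\ref{lem:equiv conditions}, $G$ is not effective, so there is an open bisection $B \subseteq \Iso(G) \setminus \go$. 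Using the homeomorphism $r|_B : B \to r(B)$, each positive $f \in C_c(B)$ gives rise to a function $\tilde f := f \circ (r|_B)^{-1} \in C_c(r(B)) \subseteq C_0(\go)$; the difference $f - \tilde f$ is nonzero in $C^*(G)$ (detectable by a regular representation $\Ind_u$ with $u \in r(B)$, where $f$ acts as a shift by the nontrivial isotropy element over $u$ while $\tilde f$ acts diagonally), yet maps to zero in an ``on-diagonal'' quotient of $C^*(G)$, producing a proper ideal.

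For the ``if'' direction, assume~(\ref{it:metrically amenable})--(\ref{it:minimal}) and fix a nonzero ideal $I \subseteq C^*(G)$. The first goal is a $C^*$-analogue of Lemma~\ref{lem:CKuniqueness}, namely showing $I \cap C_0(\go) \neq \{0\}$. Starting from a nonzero positive $b \in I$ and using~(\ref{it:metrically amenable}) to view $E$ as defined on $C^*(G)$, faithfulness on positives gives $E(b) \neq 0$, and~(\ref{it:discretely trivial}) supplies $u \in \go$ with trivial isotropy and $E(b)(u) > 0$. Approximating $b$ in $I$-norm by an element of $C_c(G)$, the off-diagonal part $b - E(b)$ is supported (up to small error) in a compact $K \subseteq G \setminus \go$; Lemma~\ref{lem:equiv conditions}(\ref{it:ClaireLem}) yields a precompact open $V \ni u$ with $VKV = \emptyset$, and for $h \in C_c(V)$ with $h(u) = 1$ the product $hbh$ is close to $hE(b)h \in C_0(\go)$ inside $I$. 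A limiting argument then places a nonzero element of $C_0(\go)$ into $I$. The second step adapts Proposition~\ref{prop:alg_nois_iff_ideal}: minimality together with a partition-of-unity argument over a finite bisection cover of $\supp(g)$ for arbitrary $g \in C_c(G)$ shows that any nonzero positive element of $I \cap C_0(\go)$ generates $C^*(G)$ as an ideal, whence $I = C^*(G)$.

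The main obstacle is the ``only if'' implication for~(\ref{it:discretely trivial}): the algebraic proof detected the offending ideal via the concrete free-module representation $\pi_{\go}$, but there is no continuous $C^*$-analogue, since $\go$ need not be totally disconnected and the discrete free-module action need not extend. I expect to handle this by constructing the required ``on-diagonal'' quotient as the composition of $E$ with a carefully chosen regular representation, relying on second-countability to feed the Baire-category content of Lemma~\ref{lem:equiv conditions} into the selection of $u$. The other delicate point, in the reverse direction, is calibrating the cutoff $h$ so that $h$ is supported in $V$, $h(u)$ is bounded below, and the approximation $hbh \approx hE(b)h$ is tight enough in $C^*$-norm to guarantee a nonzero limit — this is where the faithfulness of $E$, available thanks to~(\ref{it:metrically amenable}), is indispensable.
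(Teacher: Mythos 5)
Your overall architecture matches the paper's: both directions reduce to (a) every nonzero ideal meets $C_0(\go)$ and (b) minimality makes nonzero elements of $C_c(\go)$ full. Your treatments of $C^*(G)=C^*_r(G)$ and of minimality are fine, and your ``if'' direction is workable --- the paper simply cites Exel and Renault for the Cuntz--Krieger uniqueness step that you propose to re-prove via the faithful expectation and Lemma~\ref{lem:equiv conditions}(\ref{it:ClaireLem}), though your ``limiting argument'' needs the standard extra twist that if $I\cap C_0(\go)=\{0\}$ then the quotient map is isometric on $C_0(\go)$, which the compression estimate then contradicts.

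The genuine gap is at the step you yourself flag as the main obstacle: producing a proper nonzero ideal of $C^*(G)$ when $G$ is not topologically principal. Your concrete proposal is to realise the ``on-diagonal quotient'' as the composition of $E$ with a regular representation, and neither ingredient can do the job. The expectation $E$ is a completely positive contraction, not a homomorphism, so composing it with a representation of $C_0(\go)$ yields a c.p.\ map whose kernel is not an ideal; and a regular representation $\Ind_u$ does not annihilate $f-\tilde f$ --- as you note, it is precisely what detects that $f-\tilde f\neq 0$, because on $\ell^2(G_u)$ an isotropy element still permutes basis vectors nontrivially. What is needed is an honest $^*$-representation that is faithful on $C_0(\go)$ yet collapses isotropy, and this is the paper's key construction (Proposition~\ref{prp:pi_u}): the representation $\pi_{[u]}$ on $\ell^2([u])$, with basis indexed by the \emph{orbit} rather than by the arrows, given by $\pi_{[u]}(f)\delta_v=\sum_{\gamma\in G_v}f(\gamma)\delta_{r(\gamma)}$. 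Its $I$-norm boundedness is a Cauchy--Schwarz computation, the direct sum $\epsilon_G=\bigoplus_{[u]}\pi_{[u]}$ is faithful on $C_0(\go)$ (Remark~\ref{rmk:augmentation}), and for $f$ supported on an isotropy bisection $B$ one gets $\pi_{[u]}(f-f_0)\delta_v=f(\gamma_v)\bigl(\delta_{r(\gamma_v)}-\delta_v\bigr)=0$ because $r(\gamma_v)=v$. Without this (or an equivalent) construction, your argument for condition~(\ref{it:discretely trivial}) does not close.
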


Our proof of Theorem~\ref{thm:main} relies on the following adaptation of the augmentation
representation of a discrete group. Let $G$ be a groupoid as in Theorem~\ref{thm:main}. For each $u
\in \go$, let $[u]$ denote the orbit of $u$ under $G$; that is $[u] = r(G_u)$.

\begin{prop}
\label{prp:pi_u} Let $G$ be a second-countable, locally compact, Hausdorff, \'etale groupoid.
Fix $u \in \go$. There is a unique representation
$\pi_{[u]}$ of $C^*(G)$ on $\ell^2([u]) = \clsp\{\delta_v : v \in [u]\}$ such that for each $f \in
C_c(G)$ and $v \in [u]$,
\begin{equation}\label{eq:pi_u}
    \pi_{[u]}(f)\delta_v := \sum_{\gamma \in G_v} f(\gamma) \delta_{r(\gamma)}.
\end{equation}
\end{prop}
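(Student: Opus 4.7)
The plan is to define the operator $\pi_{[u]}(f)$ on the dense subspace $\operatorname{span}\{\delta_v : v \in [u]\}$ by the given formula, show it is bounded in operator norm by the $I$-norm, verify the $*$-algebraic identities on $C_c(G)$, and then invoke the universal property of $C^*(G)$ to extend.

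First I would check that the right-hand side of~\eqref{eq:pi_u} is a finite sum, hence a well-defined vector in $\ell^2([u])$. Since $s$ is a local homeomorphism, $G_v=s^{-1}(v)$ is discrete, and $f\in C_c(G)$ has compact support, so only finitely many $\gamma\in G_v$ contribute. This defines a linear operator $T(f)$ on the spanning set. The matrix coefficients are
\[
\langle T(f)\delta_v,\delta_w\rangle = \sum_{\gamma\in G^w_v} f(\gamma),
\]
with row and column $\ell^1$-sums controlled by $\sum_{\gamma\in G^w}|f(\gamma)|$ and $\sum_{\gamma\in G_v}|f(\gamma)|$ respectively. Both are bounded by $\|f\|_I$, so Schur's test (i.e.\ the Cauchy--Schwarz argument comparing $\sum_w|\langle T(f)\delta_v,\delta_w\rangle|$ and $\sum_v|\langle T(f)\delta_v,\delta_w\rangle|$) yields $\|T(f)\|\le\|f\|_I$. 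Hence $T(f)$ extends to a bounded operator on $\ell^2([u])$.

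Next I would verify that $f\mapsto T(f)$ is a $*$-homomorphism on $C_c(G)$. For involution,
\[
\langle T(f^*)\delta_v,\delta_w\rangle = \sum_{\gamma\in G^w_v}\overline{f(\gamma\inv)} = \overline{\sum_{\alpha\in G^v_w} f(\alpha)} = \overline{\langle T(f)\delta_w,\delta_v\rangle},
\]
giving $T(f^*)=T(f)^*$. For multiplicativity, unfolding $T(f)T(g)\delta_v$ produces a sum over composable pairs $(\alpha,\beta)$ with $s(\beta)=v$; regrouping pairs according to their product $\gamma=\alpha\beta\in G_v$ converts the sum into $\sum_{\gamma\in G_v}(f*g)(\gamma)\,\delta_{r(\gamma)} = T(f*g)\delta_v$, as in the standard convolution-algebra verification.

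Finally, since $T$ is an $I$-norm-bounded $*$-representation of $C_c(G)$, the universal property defining $\|\cdot\|$ on $C_c(G)$ yields $\|T(f)\|\le\|f\|$ for all $f\in C_c(G)$, and $T$ extends uniquely by continuity to a representation $\pi_{[u]}$ of $C^*(G)$ on $\ell^2([u])$. Uniqueness of $\pi_{[u]}$ follows from the density of $C_c(G)$ in $C^*(G)$ and the fact that \eqref{eq:pi_u} determines the operator on the total set $\{\delta_v:v\in[u]\}$. The main technical point is the Schur-test bound by $\|f\|_I$; everything else is a direct (if slightly bookkeeping-heavy) calculation.
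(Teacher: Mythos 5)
Your proposal is correct and follows essentially the same route as the paper: establish the $I$-norm bound $\|T(f)\|\le\|f\|_I$, check the $*$-homomorphism identities on $C_c(G)$, and invoke the definition of the full norm to extend to $C^*(G)$. The only cosmetic difference is that you apply Schur's test directly to the matrix coefficients of $T(f)$, whereas the paper obtains the same estimate by computing $\|f\cdot h\|^2=((f^*f)\cdot h\mid h)$ and applying Cauchy--Schwarz there; the two arguments are interchangeable.
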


\begin{rmk}
In equation~\ref{eq:pi_u}, we described $\pi_{[u]}$ in terms of the canonical orthonormal
basis for $\ell^2([u])$. For an alternative description, let $\mu$ be the measure $\mu(V)
:= |V \cap [u]|$ on $\go$. Then $\pi_{[u]}$ is the representation on $L^2(\go, \mu)$
obtained from the usual left-action of $C_c(G)$ on $C_c(\go)$ --- namely $f \cdot \phi(u)
= \sum_{\gamma \in G^u} f(\gamma) \phi(s(\gamma))$.
\end{rmk}

\begin{proof}[Proof of Proposition~\ref{prp:pi_u}]
For $f \in C_c(G)$ and a finite linear combination $h=\sum_{v\in [u]} h_v
\delta_v$, let $f \cdot h$ be the vector $\sum_{v \in [u]}
\sum_{\gamma \in G_v} f(\gamma) h_v \delta_{r(\gamma)}$. Then $h \mapsto f\cdot h$ is linear, and
$f \cdot \delta_v$ is equal to the right-hand side of~\eqref{eq:pi_u}. The following is adapted
directly from the proof of \cite[Proposition~II.1.7]{Ren80}. Fix $f \in C_c(G)$.
For $v,w \in [u]$, we have
\begin{equation}\label{eq:adjointable}
(f \cdot \delta_v | \delta_w)
    = \sum_{\gamma \in G_v} (f(\gamma) \delta_{r(\gamma)} | \delta_w)
    = \sum_{\gamma \in G_v^w} f(\gamma)
    = \sum_{\gamma \in G_w} (\delta_v | \overline{f(\gamma^{-1})} \delta_w)
    = (\delta_v | f^* \cdot \delta_w).
\end{equation} Since $k \mapsto f \cdot k$ is linear on $\lsp\{\delta_v : v \in [u]\}$,
it follows that $(f \cdot k | k') = (k | f^* \cdot k')$ for all $k,k' \in C_c([u])$. In particular,
for a finite linear combination $h=\sum_{v\in [u]} h_v \delta_v$,
\begin{align*}
\|f \cdot h\|^2
    &= ((f^*f) \cdot h | h) \\
    &= \Big|\sum_{\gamma \in G_{[u]}} \overline{(f^*f)(\gamma) h_{s(\gamma)}} h_{r(\gamma)}\Big| \\
    &\le \sum_{\gamma \in G_{[u]}} |(f^*f)(\gamma)| |h_{s(\gamma)}| |h_{r(\gamma)}| \\
    &= \sum_{\gamma \in G_{[u]}} (|(f^*f)(\gamma)|^{1/2} |h_{s(\gamma)}|)
(|(f^*f)(\gamma)|^{1/2} |h_{r(\gamma)}|). \\
\intertext{So the Cauchy-Schwarz inequality gives}
\|f \cdot h\|^2
    &\le \Big(\sum_{\gamma \in G_{[u]}} |(f^*f)(\gamma)| |h_{s(\gamma)}|^2\Big)^{1/2}
        \Big(\sum_{\beta \in G_{[u]}} |(f^*f)(\beta)| |h_{r(\beta)}|^2\Big)^{1/2} \\
    &= \Big(\sum_{v \in [u]} \Big(\sum_{\gamma \in G_v} |(f^*f)(\gamma)|\Big) |h_v|^2\Big)^{1/2}
        \Big(\sum_{w \in [u]} \Big(\sum_{\beta \in G^w} |(f^*f)(\beta)|\Big) |h_w|^2\Big)^{1/2} \\
    &\le \|(f^*f)\|_I \|h\|^2.
\end{align*}
Proposition~II.1.4 of \cite{Ren80} (or direct calculation) shows that $\|f^*f\|_I \le \|f\|^2_I$, and
it follows that $\|f\cdot h\| \le \|f\|_I\|h\|$.

Thus, for each $f \in C_c(G)$ the formula~\eqref{eq:pi_u} determines a bounded linear operator
$\pi_{[u]}(f)$ on $\ell^2([u])$, and the map $f \mapsto \pi_{[u]}(f)$ is bounded with respect to
the $I$-norm. By definition of the norm on $C^*(G)$, it therefore remains only to show that
$\pi_{[u]}$ is a $^*$-homomorphism from $C_c(G)$ to $\Bb(\ell^2([u]))$. The
calculation~\eqref{eq:adjointable} shows that $\pi_{[u]}(f)^* = \pi_{[u]}(f^*)$. For $f,g \in
C_c(G)$ and $v \in [u]$,
\begin{align*}
\pi_{[u]}(f*g)\delta_v
    &= \sum_{\gamma \in G_v} (f*g)(\gamma)\delta_{r(\gamma)}
    = \sum_{\alpha\beta \in G_v} f(\alpha)g(\beta) \delta_{r(\alpha)} \\
    &= \sum_{\beta \in G_v} \sum_{\alpha \in G_{r(\beta)}} f(\alpha)g(\beta)\delta_{r(\alpha)}
    = \sum_{\beta \in G_v} \pi_{[u]}(f)(g(\beta)\delta_{r(\beta)})
    = \pi_{[u]}(f) \pi_{[u]}(g)\delta_v.
\end{align*}
Hence $\pi_{[u]}$ is a $^*$-homomorphism as required.
\end{proof}

\begin{rmk}
\label{rmk:augmentation} The direct sum  $\epsilon_G := \bigoplus_{[u] \in \go/G} \pi_{[u]}$ of $G$
is faithful on $C_0(\go)$.  To see this, fix $f \in C_c(\go) \setminus \{0\}$ and
$u \in \go$ such that $f(u) \not= 0$. Then $\|\epsilon_G(f)\| \ge \|\pi_{[u]}(f)\delta_u\| =
\|f(u)\delta_u\| \not= 0$. If $G$ is a (discrete) group, then $\epsilon_G$ is just the
$1$-dimensional representation of $C^*(G)$ induced by the unitary representation $\epsilon : g
\mapsto 1$ of $G$, sometimes called the \emph{augmentation representation} of $G$.
\end{rmk}


\begin{prop}
\label{it:Iso hypothesis} Let $G$ be a second-countable, locally compact, Hausdorff, \'etale groupoid.
\begin{enumerate}
\item\label{it:iso->CKUT} Suppose that $G$ is topologically principal. Then every
    ideal $I$ of the reduced $C^*$-algebra $C_r^*(G)$ satisfies $I \cap C_c(\go)
    \not= \{0\}$.
\item\label{it:CKUT->iso} Suppose that every ideal of the full $C^*$-algebra $C^*(G)$
    satisfies $I \cap C_0(\go) \not= \{0\}$. Then $G$ is topologically principal.
\end{enumerate}
\end{prop}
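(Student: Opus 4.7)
\textit{Part~(\ref{it:CKUT->iso}).} I would prove the contrapositive using the augmentation representation $\epsilon_G := \bigoplus_{[u] \in \go/G} \pi_{[u]}$ of Remark~\ref{rmk:augmentation}. Suppose $G$ is not topologically principal. Since $G$ is second-countable, the last assertion of Lemma~\ref{lem:equiv conditions} furnishes a nonempty open bisection $B \subseteq \Iso(G) \setminus \go$ on which $r = s$. Fix any nonzero $f \in C_c(G)$ with $\supp f \subseteq B$, and define $g\colon \go \to \CC$ by $g(v) := f((s|_B)\inv(v))$ for $v \in s(B)$ and $g(v) := 0$ otherwise. Because $s|_B$ is a homeomorphism onto the open set $s(B)$, and $f$ vanishes on $\partial B$ (as $\supp f$ is a compact subset of the open set $B$), the function $g$ is continuous with compact support contained in $s(\supp f)$. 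For each orbit $[u]$ and each $v \in s(B) \cap [u]$, the element $\gamma_v := (s|_B)\inv(v) \in \Iso(G)$ satisfies $r(\gamma_v) = v$, so~\eqref{eq:pi_u} gives $\pi_{[u]}(f)\delta_v = f(\gamma_v)\delta_v = g(v)\delta_v = \pi_{[u]}(g)\delta_v$, and both sides vanish when $v \notin s(B)$. Hence $f - g \in \ker \epsilon_G$, and $f - g \ne 0$ since $f$ is supported off $\go$ while $g$ is supported on $\go$. By Remark~\ref{rmk:augmentation}, $\epsilon_G$ is faithful on $C_0(\go)$, so $I := \ker \epsilon_G$ is a nonzero ideal of $C^*(G)$ with $I \cap C_0(\go) = \{0\}$, contradicting the hypothesis.

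\textit{Part~(\ref{it:iso->CKUT}).} My plan is to show that the canonical conditional expectation $E\colon C^*_r(G) \to C_0(\go)$ (extending $f \mapsto f|_{\go}$ on $C_c(G)$) satisfies $E(I) \subseteq I$ for every ideal $I \trianglelefteq C^*_r(G)$. Combined with the standard fact that $E$ is faithful on positive elements (a routine consequence of the regular representations $\Ind_u$), this immediately yields a nonzero element of $I \cap C_0(\go)$, and hence of $I \cap C_c(\go)$ after truncation by a compactly supported bump. I would fix an ideal $I$, $b \in I$, and $\gamma > 0$; approximate $b$ in norm by $f \in C_c(G)$ with $\|b - f\| < \gamma/3$, and let $K := \supp f \setminus \go$ (compact in $G \setminus \go$) and $L := \supp E(f)$ (compact in $\go$). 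Lemma~\ref{lem:equiv conditions} applies here because, under second-countability, topological principality is equivalent to effectiveness: for each $u \in L$ it produces a precompact open $V_u \ni u$ with $V_u K V_u = \emptyset$. Extracting a finite subcover $L \subseteq V_1 \cup \dots \cup V_n$, I would choose nonnegative $\psi_i \in C_c(V_i)$ with $\sum_i \psi_i \le 1$ on $\go$ and $\sum_i \psi_i \equiv 1$ on $L$, and set $\phi_i := \sqrt{\psi_i} \in C_c(V_i)$, so $\sum_i \phi_i^2 \le 1$ globally and equals $1$ on $L$.

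The key observation is that, since $V_i K V_i = \emptyset$ and $f - E(f)$ is supported in $K$, each $\phi_i(f - E(f))\phi_i$ vanishes identically; hence $\phi_i f \phi_i = \phi_i^2 E(f)$, and summing gives $\sum_i \phi_i f \phi_i = (\sum_i \phi_i^2) E(f) = E(f)$ (using $E(f) \equiv 0$ off $L$). Let $c := \sum_i \phi_i b \phi_i \in I$. The completely positive compression $T\colon x \mapsto \sum_i \phi_i x \phi_i$ will be contractive, since the standard column-matrix factorisation gives $\|T\| \le \|\sum_i \phi_i^2\|_\infty \le 1$. Therefore
\[
\|c - E(b)\| \le \|T(b - f)\| + \|E(f) - E(b)\| \le 2\|b - f\| < 2\gamma/3.
\]
Since $\gamma > 0$ was arbitrary and $I$ is closed, $E(b) \in I$, and the plan concludes as indicated.

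The principal technical obstacle will be producing the finite cover $\{V_i\}$ of $L$ with $V_i K V_i = \emptyset$ and the accompanying partition of unity; this is the single point at which topological principality (equivalently, effectiveness, via Lemma~\ref{lem:equiv conditions}) is used, and it requires both second-countability (so that topological principality implies effectiveness) and compactness of $L$ (to extract a finite subcover). Once this data is in hand, the norm bound reduces to contractivity of the completely positive map $T$. For Part~(\ref{it:CKUT->iso}) the subtle point will instead be verifying that the candidate $g$ really lies in $C_c(\go)$; this relies on $\supp f$ being bounded away from $\partial B$, which in turn uses local compactness and Hausdorffness of $G$.
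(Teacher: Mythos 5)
Your part~(\ref{it:CKUT->iso}) is correct and is essentially the paper's own proof: the paper likewise takes a nonzero $f$ supported on an open bisection $B\subseteq\Iso(G)\setminus\go$ consisting of isotropy, transports it through $s|_B$ to a function $f_0\in C_c(\go)$, and observes that $f-f_0$ is a nonzero element of $\ker\epsilon_G$ while $\epsilon_G$ is faithful on $C_0(\go)$.

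Part~(\ref{it:iso->CKUT}) is where the problem lies. (The paper does not prove this part itself; it cites Exel's Theorem~4.4, so a self-contained argument would be welcome --- but yours has a genuine gap.) You invoke condition~(\ref{it:ClaireLem}) of Lemma~\ref{lem:equiv conditions} to get, ``for each $u\in L$, a precompact open $V_u\ni u$ with $V_uKV_u=\emptyset$.'' The lemma does not say this: it produces \emph{some} nonempty open $V$ inside a given open $U$, with no control over which points of $U$ it contains. Nor could any strengthening do so: if $u\in L$ has nontrivial isotropy and $K\cap G^u_u\neq\emptyset$ (which certainly occurs for suitable $f$ when $G$ is topologically principal but not principal --- density of trivial-isotropy units does not exclude such $u$ from $L=\supp E(f)$), then every neighbourhood $V$ of $u$ satisfies $VKV\supseteq K\cap G^u_u\neq\emptyset$. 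So the finite cover of $L$ need not exist, the identity $\sum_i\phi_i f\phi_i=E(f)$ fails, and $E(b)\in I$ is not obtained; indeed it is doubtful that $E(I)\subseteq I$ admits so elementary a proof. The standard repair needs only \emph{one} set $V$ and a quotient argument: take $b\in I$ positive with $\|E(b)\|=1$, approximate by $f\in C_c(G)$ with $\|b-f\|<\varepsilon$, apply the lemma to $K:=\supp f\setminus\go$ and the nonempty open set $U:=\{u\in\go:|E(f)(u)|>1-\varepsilon\}$ to get one nonempty open $V\subseteq U$ with $VKV=\emptyset$, and pick $\phi\in C_c(V)$ with $0\le\phi\le1$ and $\phi(v_0)=1$ for some $v_0\in V$. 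Then $\phi f\phi=\phi^2E(f)$ exactly as in your computation, so $\phi b\phi\in I$ lies within $\varepsilon$ of $\phi^2E(f)\in C_c(\go)$, which has norm exceeding $1-\varepsilon$. If $I\cap C_0(\go)=\{0\}$ then the quotient map $q:C^*_r(G)\to C^*_r(G)/I$ is isometric on $C_0(\go)$, whence $0=\|q(\phi b\phi)\|\ge(1-\varepsilon)-\varepsilon$, a contradiction for small $\varepsilon$. Your closing truncation from $C_0(\go)$ to $C_c(\go)$ is fine once a nonzero element of $I\cap C_0(\go)$ is in hand.
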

\begin{proof}
(\ref{it:iso->CKUT}) Since $G$ is topologically principal, Lemma~\ref{lem:equiv
conditions} implies that it is effective. The result then follows from
\cite[Theorem~4.4]{Exel:PAMS10}\footnote{Exel uses the term \emph{essentially principal}
for what we call \emph{effective} (see \cite[p.~897]{Exel:PAMS10})} (see also
\cite[Corollary~4.9]{Renault:JOT91}).

(\ref{it:CKUT->iso}) We prove the contrapositive. Suppose that $G$ is not topologically
principal. Then Lemma~\ref{lem:equiv conditions} implies that there is an open bisection
$B$ in $G\setminus \go$ consisting entirely of isotropy. Let $\epsilon_G$ be the direct
sum representation defined in Remark~\ref{rmk:augmentation}.  We show that
$\ker(\epsilon_G)$ is a nontrivial ideal in $C^*(G)$ that does not intersect $C_0(\go)$.
By Remark~\ref{rmk:augmentation}, $\ker(\epsilon_G) \cap C_0(\go) = \{0\}$ so it suffices
to construct a nonzero element of $\ker{\epsilon_G}$.

For each $u \in s(B)$, let $\gamma_u$ be the unique element in $B$ such that $s(\gamma_u) = u$. Fix
a nonzero function $f \in C_c(G)$ such that $\supp(f) \subseteq B$, and define $f_0 \in C_c(\go)$
by
\[
f_0(u):=\begin{cases} f(\gamma_u) &\text{if~} u\in s(B),\\
				0 &\text{otherwise.}\end{cases}
\]
Since $B \cap \go = \emptyset$ and since $f \not= 0$, we have $f - f_0 \not= 0$.  We claim that
$\epsilon_G(f - f_0) = 0$; that is,  $\pi_{[u]}(f - f_0) = 0$ for all $u \in \go$. To see
this, fix $u \in \go$ and $v \in [u]$. Then
\[
\pi_{[u]}(f - f_0)\delta_v
    = \sum_{\gamma \in G_v} f(\gamma)\delta_{r(\gamma)} - \sum_{\alpha \in G_v} f_0(\alpha)\delta_{r(\alpha)}.
\]
If $v \not\in s(B)$, then $f(\gamma) = f_0(\alpha) = 0$ for all $\gamma,\alpha \in G_v$, so
$\pi_{[u]}(f - f_0)\delta_v = 0$. Suppose that $v \in s(B)$. Since $f_0$ is supported on units and
$f$ is supported on $B$,
\[
\sum_{\gamma \in G_v} f(\gamma)\delta_{r(\gamma)} - \sum_{\alpha \in G_v} f_0(\alpha)\delta_{r(\alpha)}
    = f(\gamma_v)\delta_{r(\gamma_v)} - f_0(v) \delta_v = f(\gamma_v) \delta_{r(\gamma_v)} - f(\gamma_v) \delta_v.
\]
Since $B \subseteq \Iso(G)$, we have $r(\gamma_v) = s(\gamma_v) = v$, and it follows that
$\pi_{[u]}(f - f_0)\delta_v = 0$.
\end{proof}

The following standard lemma is used in the proofs of Proposition~\ref{it:irr hypothesis}
and Corollary~\ref{cor:maincor}.

\begin{lemma}\label{lem:technical}
Let $G$ be a locally compact, Hausdorff, \'etale groupoid. Suppose that $h \in C_c(G)$ is supported
on a bisection $B$ and that $f \in C_c(\go)$. Then $h * f * h^* \in C_c(\go)$ with support
contained in $r(B) \subseteq \go$ and satisfies
\[
(h * f * h^*)(r(\gamma)) = |h(\gamma)|^2 f(s(\gamma))\quad\text{ for all $\gamma \in B$.}
\]
\end{lemma}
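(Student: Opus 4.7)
The plan is to compute $h * f * h^*$ directly from the definition of convolution and exploit the bisection property. First, unfold
\[
(h * f * h^*)(\gamma) = \sum_{\alpha\beta\delta = \gamma} h(\alpha)\, f(\beta)\, h^*(\delta).
\]
Since $f$ is supported on $\go$, only triples with $\beta \in \go$ contribute, so $\beta = s(\alpha) = r(\delta)$ and the composition collapses to $\gamma = \alpha\delta$ with $f(\beta) = f(s(\alpha))$. Using $h^*(\delta) = \overline{h(\delta^{-1})}$ and the fact that $h$ is supported on $B$, the surviving terms satisfy $\alpha \in B$ and $\delta \in B^{-1}$.

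Next I would observe that $BB^{-1} \subseteq \go$: if $\alpha \in B$, $\delta \in B^{-1}$, and $s(\alpha) = r(\delta)$, then writing $\delta = \beta^{-1}$ with $\beta \in B$ gives $s(\alpha) = s(\beta)$, and injectivity of $s|_B$ forces $\alpha = \beta$, whence $\alpha\delta = \alpha\alpha^{-1} = r(\alpha) \in \go$. Consequently $(h * f * h^*)(\gamma) = 0$ whenever $\gamma \notin \go$, and if $\gamma = u \in \go$ with $u \notin r(B)$ the sum is again empty. This establishes that $h * f * h^*$ is supported in $r(B) \subseteq \go$. It lies in $C_c(G)$ because convolution of elements of $C_c(G)$ is in $C_c(G)$, and since it is supported on the clopen set $\go$ (recall $\go$ is clopen by the \'etale Hausdorff hypothesis), it lies in $C_c(\go)$.

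Finally, I would compute the value at a point $r(\alpha_0) \in r(B)$, where $\alpha_0$ is the unique element of $B$ with $r(\alpha_0) = r(\gamma)$ (unique because $B$ is a bisection). The condition $\alpha\delta = r(\alpha_0)$ combined with $\alpha \in B$ and $\delta \in B^{-1}$ forces $\alpha = \alpha_0$ and $\delta = \alpha_0^{-1}$ by the argument above, so the sum has a single term:
\[
(h*f*h^*)(r(\alpha_0)) = h(\alpha_0)\, f(s(\alpha_0))\, \overline{h(\alpha_0)} = |h(\alpha_0)|^2 f(s(\alpha_0)),
\]
which is exactly the stated formula with $\gamma = \alpha_0$. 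The only subtle point is the uniqueness argument for factorisations $\gamma = \alpha\delta$ with $\alpha \in B$, $\delta \in B^{-1}$; everything else is bookkeeping.
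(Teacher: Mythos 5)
Your proof is correct and follows essentially the same route as the paper's: unfold the triple convolution, use $\supp f \subseteq \go$ to pin the middle factor to $s(\alpha)$, and use injectivity of $s|_B$ to force the outer two factors to coincide, leaving a single term supported at $r(\alpha) \in r(B)$. The bookkeeping and the key uniqueness step match the paper's argument exactly.
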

\begin{proof}
For $\alpha \in G$, we have
\begin{equation} \label{eq:hfh* formula}
    h*f*h^*(\alpha) = \sum_{\gamma \eta \beta^{-1} = \alpha} h(\gamma) f(\eta) \overline{h(\beta)}.
\end{equation}
Fix $\gamma\eta\beta^{-1} \in G$ with $h(\gamma)f(\eta)\overline{h(\beta)} \not= 0$. Since
$\supp(f)\subseteq \go$, we have $\eta = s(\gamma) = s(\beta)$.  Since $h$ is supported on the
bisection $B$, it follows that $\gamma,\beta \in B$ and $\beta = \gamma$. Hence
$\gamma\eta\beta^{-1} = \gamma s(\gamma) \gamma^{-1} = r(\gamma) \in r(B)$. Thus the sum on the
right of~\eqref{eq:hfh* formula} is zero if $\alpha \not \in r(B)$, and has only one nonzero term
$h(\gamma) f(s(\gamma)) \overline{h(\gamma)} = |h(\gamma)|^2 f(s(\gamma))$ if $\alpha = r(\gamma)
\in r(B)$.
\end{proof}

\begin{prop}
\label{it:irr hypothesis} Let $G$ be a second-countable, locally compact, Hausdorff, \'etale groupoid.
The following are equivalent:
\begin{enumerate}
\item\label{it:irred} $G$ is minimal;
\item\label{it:full ideals} the ideal of $C^*(G)$ generated by any nonzero $f \in C_c(\go)$ is
    $C^*(G)$; and
\item\label{it:red ideals} the ideal of $C_r^*(G)$ generated by any nonzero $f \in C_c(\go)$ is
    $C_r^*(G)$.
\end{enumerate}
\end{prop}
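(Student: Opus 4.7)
The plan is to establish the cyclic implications $(\ref{it:irred}) \Rightarrow (\ref{it:full ideals}) \Rightarrow (\ref{it:red ideals}) \Rightarrow (\ref{it:irred})$. The middle arrow $(\ref{it:full ideals}) \Rightarrow (\ref{it:red ideals})$ is essentially formal: the canonical quotient $q \colon \cs(G) \to C^*_r(G)$ is the identity on $C_c(G)$, so if $I_r$ denotes the closed ideal of $C^*_r(G)$ generated by a nonzero $f \in C_c(\go)$, then $q^{-1}(I_r)$ is a closed ideal of $\cs(G)$ containing $f$, hence equals $\cs(G)$ by $(\ref{it:full ideals})$; so $I_r = q(\cs(G)) = C^*_r(G)$.

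For $(\ref{it:irred}) \Rightarrow (\ref{it:full ideals})$, I would fix a nonzero $f \in C_c(\go)$ and let $I$ be the closed ideal of $\cs(G)$ it generates, the first goal being to show $C_0(\go) \subseteq I$. Since $C_0(\go)$ is a commutative $C^*$-subalgebra of $\cs(G)$, the intersection $J := I \cap C_0(\go)$ is an ideal of $C_0(\go)$ and so has the form $C_0(V)$ for some nonempty open $V \subseteq \go$ containing $\supp(f)$. The decisive step is to show $V$ is invariant: given $u \in V$ and $\gamma \in G$ with $s(\gamma) = u$, I would pick an open bisection $B \ni \gamma$ with $s(B) \subseteq V$, a function $h \in C_c(G)$ supported in $B$ with $h(\gamma) \neq 0$, and $g \in C_c(V) \subseteq J$ with $g(u) \neq 0$. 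Lemma~\ref{lem:technical} then places $h * g * h^*$ in $C_c(\go)$ with $(h * g * h^*)(r(\gamma)) = |h(\gamma)|^2 g(u) \neq 0$; since $h * g * h^* \in I \cap C_0(\go) = C_0(V)$, this forces $r(\gamma) \in V$. Minimality upgrades $V$ to all of $\go$, so $C_0(\go) \subseteq I$. To finish, any $h \in C_c(G)$ satisfies $h = \phi * h$ for any $\phi \in C_c(\go)$ that is identically $1$ on $r(\supp(h))$; since $\phi \in I$, we get $h \in I$, and density of $C_c(G)$ in $\cs(G)$ yields $I = \cs(G)$.

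For $(\ref{it:red ideals}) \Rightarrow (\ref{it:irred})$, I would argue the contrapositive. If $G$ is not minimal, pick a nontrivial open invariant $U \subseteq \go$ and set $W := \go \setminus U$, a nonempty closed invariant subset. Form $\pi := \bigoplus_{u \in W} \Ind_u$; each $\Ind_u$ factors through $C^*_r(G)$ by the very definition of the reduced norm, so $\pi$ does as well. For any nonzero $f \in C_c(U) \subseteq C_c(\go)$ and any $u \in W$, $\gamma \in G_u$, a term of $\Ind_u(f)\delta_\gamma = \sum_{\beta \in G^{r(\gamma)}} f(\beta^{-1}\gamma)\delta_\beta$ can only be nonzero if $\beta^{-1}\gamma \in \supp(f) \subseteq \go$, which forces $\beta = \gamma$ and $s(\gamma) \in U$; but $s(\gamma) = u \in W$, a contradiction. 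So $\pi(f) = 0$. Yet $\pi$ is nonzero: any $g \in C_c(\go)$ with $g(u_0) \neq 0$ for some $u_0 \in W$ satisfies $\Ind_{u_0}(g)\delta_{u_0} = g(u_0)\delta_{u_0} \neq 0$. Thus $\ker \pi$ is a proper ideal of $C^*_r(G)$ containing the nonzero $f \in C_c(\go)$, contradicting $(\ref{it:red ideals})$.

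The main obstacle is the invariance of $V$ inside the step $(\ref{it:irred}) \Rightarrow (\ref{it:full ideals})$: extracting the topological statement that $V$ is $G$-invariant from the purely ideal-theoretic information $I \cap C_0(\go) = C_0(V)$ is exactly where the conjugation trick of Lemma~\ref{lem:technical} is indispensable, and it is the only step where the \'etale topology on $G$ really enters.
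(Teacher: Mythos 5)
Your proposal is correct and follows essentially the same route as the paper: both directions hinge on the conjugation trick of Lemma~\ref{lem:technical} to show that the ideal generated by $f$ meets $C_0(\go)$ in all of it (the paper hits each point $u\in\go$ directly via minimality of $r(G_U)$, while you phrase it as invariance of the open set $V$ with $I\cap C_0(\go)=C_0(V)$ --- the same computation), and both witness non-minimality by the kernel of regular representations $\Ind_u$ at points $u$ outside the invariant open set. The only organizational differences are that you close the cycle with the formal implication $(\ref{it:full ideals})\Rightarrow(\ref{it:red ideals})$ via the quotient map, where the paper proves $(\ref{it:irred})\Rightarrow(\ref{it:red ideals})$ directly by pushing the ideal forward, and you take a direct sum of the $\Ind_u$ where a single one suffices; neither affects correctness.
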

\begin{proof}
\mbox{$(\ref{it:irred})\implies(\ref{it:full ideals})$} and
\mbox{$(\ref{it:irred})\implies(\ref{it:red ideals})$}. Let $f\in C_c(\go)\setminus \{0\}$ and let
$I$ be the ideal of $C^*(G)$ generated by $f$. We claim that $C_c(\go) \subseteq I$. Since $I \cap
C_0(G^{(0)})$ is an ideal of $C_0(G^{(0)})$, it suffices to show that for each $u \in G^{(0)}$,
there exists $g \in I \cap C_0(G^{(0)})$ such that $g(u) \not= 0$. Fix $u \in G^{(0)}$. Let $U :=
\{v \in G^{(0)}: f(v) \not= 0\}$. Then $U$ is nonempty and open, and hence $r(G_U)$ is open because
$s$ is continuous and  the local homeomorphism $r$ is an open map. So $r(G_U)$ is a nonempty open
invariant set, and hence is equal to $G^{(0)}$ because $G$ is minimal. In particular, there exists
$\gamma \in G$ such that $s(\gamma) \in U$ and $r(\gamma) = u$. Fix $h \in C_c(G)$ such that
$\supp(h)$ is contained in a bisection and $h(\gamma) = 1$. Lemma~\ref{lem:technical} implies that
$(h * f * h^*)(u) = |h(\gamma)|^2 f(s(\gamma)) = f(s(\gamma)) \not= 0$. So $g := h
* f * h^*$ belongs to $I \cap C_0(\go)$ with $g(u) = 1$. This proves the claim.

Fix $F \in C_c(G)$. Then any $g \in C_c(\go)$ such that $g|_{r(\supp(F))} \equiv 1$
satisfies $g * F= F$. Hence $C_c(G) \subseteq I$, and so $I = C^*(G)$. Let $q : C^*(G) \to C^*_r(G)$ be the
quotient map. Then the ideal $I_r$ of $C^*_r(G)$ generated by $f$ is $q(I)$. Since $q$ restricts to
the identity map on $C_c(G)$, we have $C_c(G) \subseteq I_r$ as well, and hence $I_r = C^*_r(G)$.

\mbox{$(\ref{it:full ideals})\implies(\ref{it:irred})$} and \mbox{$(\ref{it:red
ideals})\implies(\ref{it:irred})$}. We prove the contrapositive. Suppose that $U$ is a nonempty
proper open invariant subset of $\go$. Fix $f \in C_c(G) \setminus \{0\}$ such that $\supp(f)
\subseteq U$. Then $f \in C_c(\go)$. Fix $u \in \go\setminus U$. Since $\go\setminus U$ is
invariant, $[u] \subseteq \go\setminus U$, so $f(v) = 0$ for all $v \in [u]$. It follows that the
image of $f$ under the regular representation $\Ind_u$ is zero. On the other hand, for any $g \in
C_c(\go)$ such that $g(u) = 1$, we have $\Ind_u(g)\delta_u = g(u)\delta_u \not= 0$. So $\Ind_u$ is
a nonzero representation of $C_c(G)$ with nontrivial kernel. Since $\Ind_u$ extends to each of
$C^*_r(G)$ and $C^*(G)$ it follows that the ideals of each of $C^*(G)$ and $C^*_r(G)$ generated by
$f$ are proper ideals.
\end{proof}

\begin{rmk}
Suppose that $G$ is locally compact, Hausdorff and \'etale. Thomsen observes in
\cite{Thomsen:AIF10} that if $G$ has a unit with trivial isotropy, then $G$ is topologically
principal whenever it is minimal (see Remark~\ref{rmk:top prin}). He then deduces that if $G$ has a
unit with trivial isotropy, then $C^*(G)$ is simple if and only if $G$ is minimal.  We recover this
result from Proposition~\ref{it:Iso hypothesis}(\ref{it:CKUT->iso}) together with
(\ref{it:irred})${}\iff{}$(\ref{it:red ideals}) of Proposition~\ref{it:irr hypothesis}.
\end{rmk}

\begin{proof}[Proof of Theorem~\ref{thm:main}]
Suppose $C^*(G)$ is simple. Then the quotient map from $C^*(G) \to C^*_r(G)$ has trivial kernel and
hence the two coincide. Moreover, $C^*(G)$ is the only nonzero ideal of $C^*(G)$ and $C^*(G)\cap
C_0(\go) \not= \{0\}$ so Proposition~\ref{it:Iso hypothesis} implies that $G$ is topologically
principal. The simplicity of $C^*(G)$ implies that every $f\in C_c(\go)$ generates $C^*(G)$ as an
ideal and so Proposition~\ref{it:irr hypothesis} implies that $G$ is minimal.

Now suppose that $C^*(G) = C^*_r(G)$ and that $G$ is topologically principal and minimal. Fix a
nonzero ideal $I$ in $C^*(G)$. Since $C^*(G) = C^*_r(G)$, Proposition~\ref{it:Iso
hypothesis}(\ref{it:iso->CKUT}) implies there exists a nonzero $f\in C_c(G\unit)\cap I$; and then
\mbox{$(\ref{it:irred})\implies(\ref{it:red ideals})$} of Proposition~\ref{it:irr hypothesis}
implies that the ideal generated by $f$ is $C^*(G)$.  Thus $I=C^*(G)$.
\end{proof}

Corollary~\ref{cor:maincor} below characterises the measurewise-amenable, \'etale groupoids for
which the ideal structure of $C^*(G)$ coincides with the $G$-invariant ideal structure of
$C_0(\go)$. The argument for the ``if" implication  is standard (see, for example,
\cite[Proposition~4.6]{Ren80}), but we include it for completeness.

The notion of amenability for groupoids is somewhat technical; for a detailed discussion, see
\cite{A-DR}.  For our purposes, we only need the following two facts.  First, if $G$ is measurewise
amenable, then $C^*(G) = C^*_r(G)$ \cite[Proposition~3.3.5]{A-DR}. Second, suppose that $U
\subseteq G^0$ is open and invariant.  If $G$ is measurewise amenable then each of $G_U$ and
$G_{\go\setminus U}$ is measurewise amenable \cite[Corollary~5.3.21]{A-DR}.\footnote{$G_U$ embeds
properly into $G$ since $G$ acts properly on itself.}

If $D \subseteq G^0$ is a closed invariant set, then $\{f \in C_c(G) : f|_{G_D}\equiv 0\}$ is an
ideal of $C^*(G)$ isomorphic to $C^*(G_{\go\setminus D})$, and the quotient is isomorphic to
$C^*(G_D)$ (see \cite[Lemma~2.10]{MRW96}). This decomposition fails in general for reduced
$C^*$-algebras.

\begin{cor}
\label{cor:maincor} Let $G$ be a second-countable, locally compact, Hausdorff groupoid. Suppose
that $G$ is measurewise amenable and \'etale. Then $D \mapsto \overline{\{f \in C_c(G) :
f|_{G_D} \equiv 0\}}$ is a bijection between closed invariant subsets of $\go$ and ideals of
$C^*(G)$ if and only if, for every closed invariant $D \subseteq \go$, $G_D$ is topologically
principal.
\end{cor}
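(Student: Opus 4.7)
The plan is to analyse ideals of $C^*(G)$ via the exact sequence
\[
0 \to C^*(G_{\go \setminus D}) \to C^*(G) \xrightarrow{q_D} C^*(G_D) \to 0,
\]
using measurewise amenability to ensure that $C^*(G_D) = C^*_r(G_D)$ and $C^*(G_{\go \setminus D}) = C^*_r(G_{\go \setminus D})$, so that Propositions~\ref{it:Iso hypothesis} and~\ref{it:irr hypothesis} apply to the subgroupoids in their full-algebra formulations.

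For the forward direction I would argue the contrapositive. Suppose $G_{D_0}$ fails to be topologically principal for some closed invariant $D_0 \subseteq \go$. Inspecting the proof of Proposition~\ref{it:Iso hypothesis}(\ref{it:CKUT->iso}) (which actually produces a nonzero ideal as the kernel of the augmentation representation on $C^*(G_{D_0})$) yields a nonzero ideal $J$ of $C^*(G_{D_0})$ with $J \cap C_0(D_0) = \{0\}$. Put $I := q_{D_0}^{-1}(J)$, so $I_{D_0} \subsetneq I$. If $D \mapsto I_D$ were a bijection, then $I = I_{D_1}$ for some closed invariant $D_1 \subsetneq D_0$ (the map is order-reversing); but then $q_{D_0}(I_{D_1})$ contains $C_0(D_0 \setminus D_1) \neq \{0\}$, contradicting $J \cap C_0(D_0) = \{0\}$.

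For the converse I would handle injectivity and surjectivity separately. Injectivity is Urysohn: if $u \in D \setminus D'$, an $f \in C_c(\go)$ with $f(u)=1$ and $\supp f \cap D' = \emptyset$ lies in $I_{D'}\setminus I_D$. For surjectivity, given a nonzero ideal $I$, let $U \subseteq \go$ be the open set with $I \cap C_0(\go) = C_0(U)$, and set $D := \go \setminus U$. Invariance of $U$ follows from Lemma~\ref{lem:technical}: for $\gamma \in G$ with $s(\gamma) \in U$, pick $f \in C_0(U)$ with $f(s(\gamma)) \neq 0$ and $h \in C_c(G)$ supported on a bisection containing $\gamma$ with $h(\gamma) \neq 0$; then $h*f*h^* \in I \cap C_0(\go) = C_0(U)$ is nonzero at $r(\gamma)$. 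Next I would prove $I_D \subseteq I$ by showing that $C_0(U)$ generates $I_D$ as an ideal of $C^*(G)$: a partition-of-unity argument over precompact open bisections reduces to $f \in C_c(G_U)$ supported on a single bisection $B$ with $s(B) \subseteq U$, and then $f = f*\phi$ for any $\phi \in C_c(U)$ that is identically $1$ on $s(\supp f)$. Finally I would show $q_D(I) = \{0\}$: otherwise, topological principality of $G_D$ plus amenability (via Proposition~\ref{it:Iso hypothesis}(\ref{it:iso->CKUT})) gives a nonzero $g \in q_D(I) \cap C_0(D)$; any extension $\tilde g \in C_0(\go)$ with $\tilde g|_D = g$ satisfies $\tilde g = h - (h - \tilde g)$ for some $h \in I$ with $q_D(h) = g$, and $h - \tilde g \in I_D \subseteq I$ forces $\tilde g \in I \cap C_0(\go) = C_0(U)$, contradicting $\tilde g|_D = g \neq 0$.

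The main obstacle I anticipate is the inclusion $I_D \subseteq I$: showing that $C_0(U)$ generates all of $I_D$ as an ideal requires the partition-of-unity and cut-off argument above, together with the observation that compactly supported functions on the open subgroupoid $G_U$ extend continuously by zero to $C_c(G)$. Once this is in hand, the remainder is bookkeeping with the ideal-theoretic exact sequence and the cited propositions.
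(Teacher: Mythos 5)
Your proposal is correct and follows essentially the same route as the paper: both arguments rest on the exact sequence $0 \to C^*(G_{\go\setminus D}) \to C^*(G) \to C^*(G_D) \to 0$ (cited from \cite{MRW96, Renault:JOT91}), the fact that measurewise amenability passes to reductions by closed invariant sets, Lemma~\ref{lem:technical} for invariance of the zero set of $I \cap C_0(\go)$, and Proposition~\ref{it:Iso hypothesis} applied to $G_D$ in each direction. The only differences are cosmetic: you verify by a cut-off argument that $C_0(U)$ generates $\ker q_D$ where the paper cites \cite[Remark~4.10]{Renault:JOT91}, and in the forward direction the paper exhibits the two distinct ideals $q_{D_0}^{-1}(J)$ and $I_{D_0}$ having the same intersection with $C_0(\go)$, whereas you show the same ideal $q_{D_0}^{-1}(J)$ cannot be of the form $I_{D_1}$.
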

\begin{proof}
First, we claim that there is a bijection between closed invariant subsets $D$ and ideals of the
form $I \cap C_0(\go)$, where $I$ is an ideal in $C^*(G)$. Let $D$ be a closed invariant subset.
Then the map that sends $D$ to the ideal $\{f \in C_0(\go) : f|_D \equiv 0\} \subseteq C_0(\go)$ is
a well defined injection.  To see that this map is a surjection onto the set of ideals of the form
$I \cap C_0(G)$, let  $I$ be an ideal of $C^*(G)$. Since the multiplication in $C_0(\go)$ is
pointwise, the ideal $I \cap C_0(\go)$ has the form $\{f \in C_0(\go) : f|_D \equiv 0\}$ for some
closed $D \subseteq \go$.  We show that $D$ is invariant by establishing that its complement is
invariant. Fix $\gamma \in G$ such that $s(\gamma) \not\in D$, and $f \in$ $I\cap C_0(\go)$ such
that $f(s(\gamma)) = 1$. We must show that $r(\gamma) \not\in D$. Let $B$ be an open bisection of
$G$ containing $\gamma$, and $h$ be a function supported on $B$ such that $h(\gamma) = 1$. By
Lemma~\ref{lem:technical}, $(h * f * h^*)(r(\gamma)) = |h(\gamma)|^2 f(s(\gamma)) = 1$, so
$r(\gamma) \not\in D$.  This proves our claim.

Now, it suffices to show that $I \mapsto I \cap C_0(\go)$ is a bijection if and only if $G_D$ is
topologically principal for each closed invariant $D \subseteq \go$.

First, suppose that $G_D$ is topologically principal for every closed invariant $D \subseteq \go$.
Fix an ideal $I$ of $C^*(G)$. Let $J$ be the ideal of $C^*(G)$ generated by $I \cap C_c(\go)$. Then
$J \subseteq I$. Since $G$ is measurewise amenable, $C^*(G) = C^*_r(G)$. Hence $J \not= \{0\}$ by
Proposition~\ref{it:Iso hypothesis}. We must show that $J = I$.

Let $J_0 := J \cap C_0(\go)$, and let $D := \{u \in \go : f(u) = 0\text{ for all } f \in J_0\}$; so
$J_0$ is the ideal $\{f \in C_0(\go) : f(u) = 0\text{ for all } u \in D\}$ of $C_0(\go)$.  As
above, $D$ is a closed invariant subset of $\go$. So \cite[Remark~4.10]{Renault:JOT91} implies that
restriction of functions induces an isomorphism $C^*(G)/J \cong C^*(G_D)$, and this isomorphism
carries $I/J$ to an ideal of $C^*(G_D)$ which has trivial intersection with $C_0(D)$ by
construction of $J$. Corollary~5.3.21 of \cite{A-DR} implies that $G_D$ is measurewise amenable, so
Proposition~\ref{it:Iso hypothesis} implies that $I/J$ is trivial and hence $I = J$ as required.

We prove the reverse implication by contrapositive. Suppose that there exists a closed
invariant subset $D$ of $\go$ such that  $G_D$ is not topologically principal.
Lemma~\ref{lem:equiv conditions} shows that $\Iso(G_D)^\circ \not= D$. Let $I(D)$ be the
ideal of $C^*(G)$ generated by $\{f \in C_c(\go) : f|_D \equiv 0\}$. Again by
\cite[Remark~4.10]{Renault:JOT91}, restriction of functions induces an isomorphism $\phi
: C^*(G)/I(D) \to C^*(G_D)$. Proposition~\ref{it:Iso hypothesis} applied to the groupoid
$G_D$ gives a nontrivial ideal $J$ of $C^*(G_D)$ such that $J \cap C_c(D) = \{0\}$. Let
$q_D : C^*(G) \to C^*(G)/I(D)$ and $q_J : C^*(G_D) \to C^*(G_D)/J$ be the quotient maps.
Let $K := \ker(q_J \circ \phi \circ q_D)$. That $J \cap C_c(D) = \{0\}$ forces $K \cap
C_c(\go)=C_0(D)$. That $J$ is nontrivial implies that $K \not= I(D)$. Since $K \cap
C_0(\go) = I(D) \cap C_0(\go)$, the result follows.
\end{proof}

\begin{rmk}
The hypothesis of measurewise amenability in Corollary~\ref{cor:maincor} is required
only to guarantee that $C^*(G_D) = C^*_r(G_D)$ for every closed invariant subset of $\go$. So the
theorem also holds under this formally weaker (but  less checkable)
hypothesis.
\end{rmk}

Recall that an \'etale groupoid $G$ is \emph{locally contracting} if for every nonempty open subset
$U$ of $\go$, there exists an open subset $V$ of $U$ and an open bisection $B$ such that
$\overline{V}\subseteq s(B)$ and $r(B\overline{V})\subsetneq V$ \cite[Definition~2.1]{AD97}. In the
following corollary, we use Theorem~\ref{thm:main} and Lemma~\ref{lem:equiv conditions} to
strengthen \cite[Proposition~2.4]{AD97}.

\begin{cor}
\label{cor: pure inf} Let $G$ be a second-countable, locally compact, Hausdorff groupoid.  Suppose
that $G$ is also locally contracting and  \'etale, and that $C^*(G)$ is
simple.  Then $C^*(G)$ is purely infinite.
\end{cor}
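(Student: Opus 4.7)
The plan is to obtain the corollary as an immediate packaging of Theorem~\ref{thm:main} together with the known pure-infiniteness theorem of Anantharaman-Delaroche. There is essentially no new hard analysis; the substance is already carried by Theorem~\ref{thm:main}, and the role of Lemma~\ref{lem:equiv conditions} is to translate the conclusion of Theorem~\ref{thm:main} into the form of the hypothesis required by \cite[Proposition~2.4]{AD97}.

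Concretely, I would start from the assumption that $C^*(G)$ is simple and invoke Theorem~\ref{thm:main}. This yields three facts at once: $C^*(G) = C^*_r(G)$, $G$ is topologically principal, and $G$ is minimal. The first of these equalities will be used at the end to transfer pure infiniteness of $C^*_r(G)$ back to $C^*(G)$. To match the hypothesis of \cite[Proposition~2.4]{AD97}, which is typically phrased in terms of effectiveness (or, in Anantharaman-Delaroche's terminology, essential principality), I would then appeal to Lemma~\ref{lem:equiv conditions}, under which topological principality implies effectiveness, i.e.\ $\Iso(G)^\circ = \go$.

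At this point $G$ is second-countable, locally compact, Hausdorff, \'etale, locally contracting, effective and minimal, so \cite[Proposition~2.4]{AD97} applies and produces an infinite projection in every nonzero hereditary subalgebra of $C^*_r(G)$. Since $C^*(G) = C^*_r(G)$ is simple by hypothesis, this is exactly the statement that $C^*(G)$ is purely infinite in the sense of Cuntz, completing the argument.

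I do not foresee any real obstacle here: the corollary is a one-line deduction once Theorem~\ref{thm:main} and Lemma~\ref{lem:equiv conditions} are in hand. The only point worth a moment's care is verifying that the version of \cite[Proposition~2.4]{AD97} one cites genuinely takes effectiveness (rather than topological principality) as input, so that Lemma~\ref{lem:equiv conditions} is strictly needed to bridge from Theorem~\ref{thm:main}; if Anantharaman-Delaroche instead state their result with topologically principal hypotheses, then the appeal to Lemma~\ref{lem:equiv conditions} is redundant and the proof collapses to a single citation of Theorem~\ref{thm:main}.
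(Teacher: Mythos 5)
Your proposal is correct and follows essentially the same route as the paper: deduce from Theorem~\ref{thm:main} that $G$ is topologically principal (and that $C^*(G)=C^*_r(G)$), then invoke \cite[Proposition~2.4]{AD97} to place an infinite projection in every nonzero hereditary subalgebra, which together with simplicity gives pure infiniteness. Your extra care about whether the cited proposition is stated for effective or topologically principal groupoids, and about passing between the full and reduced algebras, is sensible but does not change the argument.
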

\begin{proof}
Theorem~\ref{thm:main} implies that $G$ is topologically principal, so
\cite[Proposition~2.4]{AD97} implies that every nonzero hereditary $^*$-subalgebra of $C^*(G)$
contains an infinite projection.
\end{proof}


\section{Examples} \label{sec:examples}

In this section, we present some examples to indicate why the hypotheses on our main
theorem are needed.  We also demonstrate that the final assertion of Lemma~\ref{lem:equiv
conditions} fails if $G$ is either not second-countable or not \'etale.

\begin{example}[Amenability]\label{ex:amenable}
Theorem~\ref{thm:main} cannot be strengthened to a characterisation of simplicity for
$C^*_r(G)$ for locally compact, Hausdorff, \'etale groupoids: the free group $\FF_2$ on
two generators, regarded as a discrete groupoid with just one unit, is a second
countable, locally compact, Hausdorff, \'etale groupoid that is not topologically
principal.  However, Powers proved in \cite{Powers:DMJ75} that $C^*_r(\FF_2)$ is simple.
\end{example}

\begin{example}[Twisted groupoid algebras]
Our characterisation of simplicity does not extend to groupoid $C^*$-algebras that are `twisted' by
a $2$-cocycle, as defined in \cite{Ren80}. To see why, consider the group $\ZZ^2$ regarded as a
discrete groupoid with one unit. This is a locally compact, Hausdorff, \'etale, amenable groupoid
with $\Iso(\ZZ^2) = \ZZ^2$, so our theorem reduces to the observation that $C^*(\ZZ^2) \cong
C(\TT^2)$ is not simple. To see that this does not extend to twisted algebras, fix $\theta \in
[0,1]\setminus \QQ$ and let $\phi_\theta : \ZZ^2 \times \ZZ^2 \to \TT$ be the $\TT$-valued
$2$-cocycle $\theta((m_1,m_2), (n_1,n_2)) = e^{i\theta(m_2 n_1)}$. It is well known that the
twisted groupoid $C^*$-algebra $C^*(\ZZ^2, \phi_\theta)$ is the irrational rotation algebra
$A_\theta$ and hence simple.
\end{example}

In Section~\ref{sec:Cstar} we were able to replace the hypothesis that $G$ is effective,
used in Section~\ref{sec:algebra}, with the more familiar hypothesis that it is
topologically principal.  The justification for this is
\cite[Proposition~3.6]{Renault:IMSB08}, which tells us that for second-countable,
Hausdorff and \'etale groupoids, the two hypotheses are equivalent. One might ask whether
the conditions are equivalent in general. The next example shows that for non-\'etale
$G$, effectiveness does not entail being topologically principal.

\begin{example}\label{ex:2ndctble ex}
Let $X := (0,1) \times \TT$. Define a continuous right action of $\RR$ on $X$ by
\[
(s, e^{i\theta})\cdot t = (s, e^{i(\theta + 2st\pi)}).
\]
Let $G$ be the transformation-group groupoid $X \rtimes \RR$. For each $u = (s,
e^{i\theta}) \in \go$, the isotropy group is $G^u_u = \{u\} \times \frac{1}{s}\ZZ$, so no
point in $\go$ has trivial isotropy. Fix an open set $U$ in $G$. We must show that $U
\setminus \Iso(G) \not= \emptyset$. Since $U$ is open, there exist $0 < a < b < 1$,
$\theta \in (0,2\pi)$, and $t \in \RR \setminus \{0\}$ such that $((a,b) \times
\{e^{i\theta}\}) \times \{t\} \subseteq U$. Fix $s \in (a,b)$. If $st \not\in \ZZ$ then
$((s, e^{i\theta}), t) \in U \setminus \Iso(G)$. So suppose that $st \in \ZZ$. Choose
$\varepsilon \in (0, \frac{1}{t})$ such that $s + \varepsilon \in (a,b)$. Then $st < (s +
\varepsilon)t < st+1$, so $(s + \varepsilon)t \not\in \ZZ$. Hence $((s +
\varepsilon,e^{i\theta}), t) \in U \setminus \Iso(G)$.
\end{example}

Our next example is also effective without being topologically principal. This time $G$
is \'etale and has totally disconnected unit space, but is not second-countable. This
shows that Lemma~\ref{lem:CKuniqueness} is strictly stronger than
\cite[Theorem~5.2]{CFST}.

\begin{example}\label{ex:etale ex}
Let $K$ denote the Cantor set and give $\TT$ the discrete topology. Let $X$ be the topological
product space $(K \cap (0,1)) \times \TT$. Define an (algebraic) action of $\RR$ on $X$ by
restriction of the action of Example~\ref{ex:2ndctble ex}. Endow the acting copy of $\RR$ with the
discrete topology. Then the action is continuous and the transformation groupoid $G$ is \'etale
(but not second-countable). Moreover, every open subset of $G$ which does not intersect $\go$
contains a subset of the form $((K \cap (a,b)) \times \{e^{i\theta}\}) \times \{t\}$ as in
Example~\ref{ex:2ndctble ex}, so arguing as in that example (using that $K \cap (a,b)$ has no
isolated points), we see that the interior of the isotropy subgroupoid is $\go$.
\end{example}

In Examples \ref{ex:2ndctble ex}~and~\ref{ex:etale ex}, $\go$ admits many nontrivial
closed proper invariant subsets. We do not have an example of a locally compact,
Hausdorff, \'etale, minimal, effective groupoid that is not topologically principal.


\section{Exel-Vershik systems}\label{sec:ExelVershik}

When we first began trying to prove that $\ag$ is simple if and only if $G$ is minimal and
topologically principal, we went looking for examples --- other than higher-rank graph groupoids --- of
\'etale groupoids with totally disconnected unit spaces to test the hypothesis. We were led to the
work of Exel and Vershik in \cite{EV06}. Their characterisation of simplicity
\cite[Theorem~11.2]{EV06} led us to condition~\eqref{it:3.1} of Lemma~\ref{lem:equiv conditions}
and from there to our main simplicity theorems. In this section, we investigate the relationship
between our result and that of Exel and Vershik.  We obtain a generalisation of their
simplicity theorem to a very broad class of dynamical systems.

Recall that an \emph{Ore semigroup} is a monoid $M$ which is cancellative and satisfies:
\begin{equation}
\label{eq: upward filtering}
\text{for all } m,n\in M, \text{ there exist } p,q\in M\text{ such that } pm=qn.
\end{equation}

\begin{dfn}
An $\emph{Exel-Vershik}$ system is a triple $(X,M,T)$ consisting of a second-countable, locally
compact, Hausdorff space $X$, a countable discrete Ore semigroup $M$, and an action $T$ of $M$ on
$X$ by local homeomorphisms; we write $T^m$ for the local homeomorphism associated to $m \in M$.
\end{dfn}

\begin{rmk}\label{rmk: com and group upward filtering}
Every commutative monoid and every group is an Ore semigroup: if $M$ is commutative then $nm=mn$
and if $M$ is a group then $m\inv m =  n\inv n$.
Indeed, a monoid $M$ is an Ore semigroup if and only if there is an embedding of $M$ in a group
$\Gamma = \Gamma(M)$ such that $\Gamma= M^{-1} M$ (see, for example, \cite[Theorem~1.2]{Lac00}).
The group $\Gamma$ is unique up to isomorphism and we call it the \emph{Grothendieck group of $M$}.
\end{rmk}

Let $(X,M,T)$ be an Exel-Vershik system and let $\Gamma(M)$ be the Grothendieck group of $M$.
Consider the set
\[
G(X,T):=\{(x,m\inv n,y)\in X\times \Gamma(M) \times X: m,n\in M, T^m(x)=T^n(y)\}.
\]
Remark~\ref{rmk: com and group upward filtering} and \cite[Proposition~3.1]{ER07} imply that the
formulas
\begin{align*}
r(x,m\inv n,y)&=x & s(x,m\inv n,y)&=y\\
(x,m\inv n,y)\cdot (y, p\inv q, z)&:=(x, m\inv np\inv q, z)&(x,m\inv n,y)\inv&=(y, n\inv m, x)
\end{align*}
make $G(X,T)$ into a groupoid.\footnote{Our convention for $G(X,T)$ is slightly different
than in \cite{ER07} for compatibility with Example~\ref{examples}\eqref{ex: dis group}.}

For precompact open subsets $U, V$ of $X$ and $m,n \in M$ such that $T^m|_U$ and $T^n|_V$ are
homeomorphisms and $T^m(U) = T^n(V)$, let
\[
    Z(U,V,m,n) := \{(x,m\inv n,y): x\in U, y\in V, T^m(x) = T^n(y)\}.
\]
Then the sets $Z(U,V,m,n)$ form a base of precompact open bisections for a second-countable
topology on $G(X,T)$. Under this topology, $G(X,T)$ is a locally compact, Hausdorff, \'etale
groupoid \cite[Proposition~3.2]{ER07}.

\begin{examples}
\label{examples}
\begin{enumerate}
\item \label{ex DR} If $M=\NN$ then  $G(X,T)$ is the Deaconu-Renault groupoid of the
    local homeomorphism $T$ \cite{Dea95}.
\item \label{ex: dis group} Let $M$ be a discrete group and suppose $T$ is an action of $M$ on
    $X$.  Then $T^g$ is a homeomorphism for all $g\in M$ so in particular a local
    homeomorphism. The Grothendieck group of $M$ is $M$. Further if $T^m(x)=T^n(y)$ then
    $x=T^{m\inv n} (y)$. So
    \[
        G(X,T)=\{(T^g(y), g, y): y\in X, g\in M\}
    \]
    and for each basic open set $Z(U,V,m, n)$, we have
    \begin{align*}
        Z(U,V,m,n)&=\{(T^{m^{-1}n}(y),{m^{-1}n},y):y\in V,\, T^{m^{-1}n}(y)\in U\}\\
            &=\{(T^{m^{-1}n}(y),{m^{-1}n},y): y\in V\cap T^{n^{-1}m}(U)\}.
    \end{align*}
    Thus the map $(T^g(y),g,y)\mapsto (y, g)$ induces an isomorphism of $G(X,T)$ with
    the transformation-group groupoid $X \rtimes_T M$.
\item Let $\Lambda$ be a row-finite higher-rank graph
    with no sources as in \cite{KumPas00}. Recall that $\Lambda^\infty$ denotes the
    infinite-path space of $\Lambda$ and that for $n \in \NN^k$ we write $\sigma^n$ for the
    shift map $\sigma^n(x)(p,q) = x(p+n, q+n)$ on $\Lambda^\infty$. The groupoid $\Gg_\Lambda$
    of \cite{KumPas00} is then identical to the groupoid corresponding to the Exel-Vershik
    system $(\Lambda^\infty, \NN^k, \sigma)$. Kumjian and Pask show that
    $\Gg_\Lambda$ is amenable in \cite[Theorem~5.5]{KumPas00}.
\end{enumerate}
\end{examples}

The next definition is an extrapolation of \cite[Definition~10.1]{EV06} to arbitrary
Exel-Vershik systems. This notion of topological freeness of $(X,M,T)$  is formally
weaker than that of \cite[Definition~1]{AS94} when $M$ is a countable discrete abelian
group.

\begin{dfn}
\label{def: top free} We say an Exel-Vershik system $(X,M,T)$ is \emph{topologically free} if for
every pair $m \neq n \in M$ the set $\{x \in X : T^m(x) = T^n(x)\}$ has empty interior.
\end{dfn}

\begin{prop}\label{prp:characterise top free}
An Exel-Vershik system $(X,M,T)$ is topologically free if and only if the associated groupoid
$G(X,T)$ is topologically principal.
\end{prop}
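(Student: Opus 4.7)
The plan is to recast topological principality of $G(X,T)$ as a density statement inside $X$, and then toggle between the two formulations via the Baire category theorem applied to the countably many ``bad'' sets in $X$.

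First I would identify $\go$ with $X$ via the canonical bijection $x \mapsto (x,e,x)$, where $e$ denotes the identity of $\Gamma(M)$. Under this identification, the isotropy group at $x$ is
\[
G(X,T)_x^x = \{(x, m^{-1}n, x) : m,n \in M,\ T^m(x) = T^n(x)\}.
\]
Because $M$ embeds in $\Gamma(M)$ (Remark~\ref{rmk: com and group upward filtering}), the element $m^{-1}n \in \Gamma(M)$ equals $e$ if and only if $m = n$ in $M$. Hence the unit $x$ has trivial isotropy precisely when $T^m(x) \neq T^n(x)$ for every pair $m \neq n$ in $M$. Setting
\[
E_{m,n} := \{x \in X : T^m(x) = T^n(x)\},
\]
the set of units with trivial isotropy is then $X_0 := X \setminus \bigcup_{m \neq n} E_{m,n}$.

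Each $E_{m,n}$ is closed in $X$ since $T^m$ and $T^n$ are continuous and $X$ is Hausdorff. By definition, $(X,M,T)$ is topologically free exactly when each closed set $E_{m,n}$ is nowhere dense, while $G(X,T)$ is topologically principal exactly when $X_0$ is dense in $X$, i.e. when $\bigcup_{m \neq n} E_{m,n}$ has empty interior. The direction ``topologically principal implies topologically free'' is immediate: any $E_{m,n}$ is a subset of the union, and a subset of a set with empty interior has empty interior.

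For the converse, suppose each $E_{m,n}$ is nowhere dense. Since $M$ is countable, $\bigcup_{m \neq n} E_{m,n}$ is a countable union of closed nowhere dense subsets of $X$. The space $X$ is locally compact Hausdorff, hence has the Baire property, so this union has empty interior and $X_0$ is dense, giving topological principality of $G(X,T)$. The only real subtlety is the bookkeeping around the embedding $M \hookrightarrow \Gamma(M)$ to justify that a nontrivial isotropy element corresponds to a genuine pair $m \neq n$ with $T^m(x) = T^n(x)$; once that is in place, the statement reduces to the standard Baire-category equivalence between ``each $E_{m,n}$ is nowhere dense'' and ``the countable union $\bigcup_{m \neq n} E_{m,n}$ has empty interior.''
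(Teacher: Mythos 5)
Your proof is correct, but it takes a genuinely different route from the paper's. The paper never isolates the sets $E_{m,n}$: it shows that failure of topological freeness produces a basic open bisection $Z(V,V,m,n)$ inside $\Iso(G(X,T))\setminus\go$, and conversely that any open bisection of isotropy contains a basic set $Z(U,V,m,n)$ forcing $U=V$, $m\neq n$ and $T^m=T^n$ on $U$; in other words it proves that $(X,M,T)$ is topologically free if and only if $G(X,T)$ is \emph{effective}, and then passes between effectiveness and topological principality via the final assertions of Lemma~\ref{lem:equiv conditions} (which rest on Renault's Baire-category result). You instead run the Baire argument directly on $X$, writing the set of units with nontrivial isotropy as the countable union $\bigcup_{m\neq n}E_{m,n}$ of closed sets. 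Your version is more self-contained and makes visible exactly where countability enters: you need only that $M$ is countable and that $X$ is locally compact Hausdorff (hence Baire), not second-countability of $X$, whereas the paper's route needs $G(X,T)$ second-countable. What the paper's route buys is the intermediate equivalence with effectiveness, which is the form of the condition that feeds into the rest of the paper; your argument proves the stated proposition but does not record that equivalence. Your identification of the isotropy is sound --- the one point to state carefully, as you do, is that an isotropy element $(x,g,x)$ with $g\neq e$ corresponds to \emph{some} representation $g=m^{-1}n$ with $T^m(x)=T^n(x)$, and every such representation has $m\neq n$ because $M$ embeds in $\Gamma(M)$.
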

\begin{proof}
Suppose that $(X,M,T)$ is not topologically free. Then there exist $m\neq n\in M$ and an
open set $U\subseteq X$ such $T^m(x)=T^n(x)$ for all $x\in U$. Fix $z \in U$ and
neighbourhoods $W_m$ and $W_n$ of $z$ in $U$ such that  $T^l|_{W_l}$  is a homeomorphism
for $l=m,n$.  Define $V=W_m\cap W_n$. Then $z\in V$ and $T^l|_V$ is a homeomorphism for
both $l=m,n$. Since $T^m(x)=T^n(x)$ for all $x\in V\subseteq U$ we have $T^m(V)=T^n(V)$,
so the set $Z(V,V,m,n)$ is an open subset of $\Iso(G(X,T)) \setminus X$. Thus
Lemma~\ref{lem:equiv conditions} implies that $G(X,T)$ is not topologically principal.

Conversely, suppose that  $G$ is not topologically principal. By Lemma~\ref{lem:equiv
conditions}, there exists an open bisection $B\subseteq G(X,T)\setminus X$ such that
$r(\gamma)=s(\gamma)$ for all $\gamma\in B$.   So there is a basic open set $Z(U,V, m,n)$
contained in $B$. That $B \subseteq G(X,T)\setminus X$ forces $m\neq n$. Since $Z(U,V,
m,n) \subseteq B$ and $r(\gamma)=s(\gamma)$ for all $\gamma\in B$, we have $U=V$ and $T^m
x = T^n x$ for all $x \in U$. So $(X,M,T)$ is not topologically free.
\end{proof}

\begin{rmk}
The special case of Example~\ref{examples}(\ref{ex DR}) where $X$ is  a compact Hausdorff space and
$T:X\to X$  a covering map  was considered in \cite{CS09}.
Proposition~\ref{prp:characterise top free} implies that $(X,M,T)$ is topologically
principal, and so Proposition~\ref{it:Iso hypothesis} recovers \cite[Theorem~6
$((1)\Leftrightarrow (2))$]{CS09}.
\end{rmk}

\begin{rmk}
Recall from \cite[Definition~4.3]{KumPas00} that a row-finite higher-rank graph $\Lambda$ with no sources
is \emph{aperiodic} if for any $v\in \Lambda^0$ there exists $x\in \Lambda^\infty$ such that
$r(x)=v$ and $\sigma^n(x)\neq \sigma^m(x)$ for all $m\neq n\in \NN^k$. Recall also from
\cite[Definition~1]{RS07} that $\Lambda$ has \emph{no local periodicity} if for any $n\neq m\in
\NN^k$ and $v\in \Lambda^0$ there exists $x\in \Lambda^\infty$ such that $r(x)=v$ and
$\sigma^n(x)\neq \sigma^m(x)$. Kumjian and Pask show that $\Lambda$ is aperiodic if and only if
$G_\Lambda$ is topologically principal \cite[Proposition~4.5]{KumPas00}. A similar argument shows
that $\Lambda$ has no local periodicity if and only if the Exel-Vershik system $(\Lambda^\infty,
\NN^k, \sigma)$ is topologically free. Thus Proposition~\ref{prp:characterise top free} can be
viewed as a generalisation of \cite[Lemma~3.2]{RS07}.
\end{rmk}

Theorem~\ref{thm:main} and Proposition~\ref{prp:characterise top free} imply that if the full and
reduced $C^*$-algebras of the groupoid $G(X,T)$ of an Exel-Vershik system $(X,M,T)$ coincide, then
the associated $C^*$-algebra $C(X) \rtimes_T \Gamma(M)$ is simple if and only if the system is
topologically free and for each $x \in X$ the orbit
\[
    [x]_T := \{y \in X : T^m y = T^n x \text{ for some }m,n \in M\}
\]
is dense in $X$. It is therefore an interesting question whether $C^*(G(X,T)) = C^*_r(G(X,T))$
whenever $\Gamma(M)$ is amenable. We give a partial answer which applies to all systems for which
Exel and Renault's results guarantee that the Exel crossed product $C(X) \rtimes_T \Gamma(M)$ of
\cite{Exel:EDTS08} coincides with $C^*(G(X,T))$.

\begin{cor}
Suppose $M$ is an Ore semigroup  such that $\Gamma(M)$ is amenable. Suppose that $(X, M, T)$ is an
Exel-Vershik system satisfying the standing hypotheses~4.1 of \cite{ER07}. Then $C^*(G(X,T)) =
C^*_r(G(X,T))$. Moreover, $C(X) \rtimes_T \Gamma(M)$ is simple if and only if the system is
topologically free and $\overline{[x]}_T = X$ for each $x \in X$.
\end{cor}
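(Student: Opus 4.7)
The plan has three stages: first, establish that $G(X,T)$ is (measurewise) amenable whenever $\Gamma(M)$ is amenable; second, translate minimality of $G(X,T)$ into density of the orbits $[x]_T$; and third, invoke Theorem~\ref{thm:main}, using the identification $C(X)\rtimes_T \Gamma(M)\cong C^*(G(X,T))$ supplied by the main results of \cite{ER07} under the standing hypotheses~4.1 cited in the statement.

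For amenability I would use the continuous cocycle $c\colon G(X,T)\to \Gamma(M)$ defined by $c(x,m^{-1}n,y):=m^{-1}n$; continuity is immediate since $c$ is constantly equal to $m^{-1}n$ on each basic open bisection $Z(U,V,m,n)$. The kernel $\Hh:=c^{-1}(e)$ is the tail-equivalence relation $\{(x,e,y):T^m x=T^m y\text{ for some }m\in M\}$. Using~\eqref{eq: upward filtering}, $\Hh$ is the directed union of the open subgroupoids $\Hh_m:=\{(x,e,y):T^m x=T^m y\}$: given $m,n\in M$, choose $p,q\in M$ with $pm=qn$, so that $\Hh_m\cup\Hh_n\subseteq\Hh_{pm}$. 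Each $\Hh_m$ is a principal \'etale equivalence relation pulled back from the diagonal via the local homeomorphism $T^m$, hence proper and therefore amenable. Amenability passes to directed unions of open subgroupoids, so $\Hh$ is amenable. Since $\Gamma(M)$ is amenable, a standard cocycle-extension result (see \cite{A-DR}) yields amenability of $G(X,T)$, and then \cite[Proposition~3.3.5]{A-DR} gives $C^*(G(X,T))=C^*_r(G(X,T))$.

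For the simplicity equivalence, Proposition~\ref{prp:characterise top free} already rephrases topological principality of $G(X,T)$ as topological freeness of $(X,M,T)$. For minimality, I just observe that the orbit of $x\in X=G(X,T)^{(0)}$ under $G(X,T)$ is
\[
r\bigl(G(X,T)_x\bigr)=\{y\in X:\exists\,m,n\in M,\,T^m y=T^n x\}=[x]_T,
\]
and that for a second-countable, locally compact, Hausdorff, \'etale groupoid, minimality is equivalent to density of every orbit in the unit space. Plugging these two translations into Theorem~\ref{thm:main} and invoking the Exel--Renault isomorphism $C(X)\rtimes_T\Gamma(M)\cong C^*(G(X,T))$ completes the argument.

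The main obstacle I foresee is the amenability step: carefully checking that each $\Hh_m$ is amenable, that amenability passes to the directed union $\Hh$, and then that the cocycle extension yields amenability of $G(X,T)$ from amenability of $\Hh$ and $\Gamma(M)$. Everything after that is a direct application of Theorem~\ref{thm:main} combined with Proposition~\ref{prp:characterise top free} and the orbit identification above.
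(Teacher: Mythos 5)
Your proposal reaches the right conclusion, but for the one nontrivial step --- showing $C^*(G(X,T)) = C^*_r(G(X,T))$ --- it takes a genuinely different route from the paper. The paper never proves amenability of the groupoid: it takes the isomorphism $\pi : C(X)\rtimes_T\Gamma(M)\cong C^*(G(X,T))$ of \cite[Theorem~6.6]{ER07} (this is where the standing hypotheses~4.1 enter) and shows directly that $q\circ\pi$ is injective, where $q$ is the quotient onto the reduced algebra, by re-running Exel--Renault's argument with $C^*_r(G(X,T))$ in place of $C^*(G(X,T))$; the point is that injectivity need only be checked on each $\Gamma(M)$-graded subspace (amenability of $\Gamma(M)$ enters here), and the relevant computations in \cite[Proposition~6.5]{ER07} involve only elements of $C_c(G(X,T))$. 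Your route instead establishes measurewise amenability of $G(X,T)$ itself via the canonical cocycle $c(x,g,y)=g$ into $\Gamma(M)$, the decomposition of $c^{-1}(e)$ as the directed union (using~\eqref{eq: upward filtering}) of the open subrelations $\Hh_m:=\{(x,e,y): T^mx=T^my\}$, and a cocycle-extension result. The individual pieces are sound: each $\Hh_m$ carries the subspace topology from $X\times X$ because $T^m$ is a local homeomorphism, so it is a proper, hence amenable, \'etale equivalence relation, and amenability does pass to increasing unions of open subgroupoids. The one input you should not wave through as ``standard'' is the final one --- that a continuous cocycle into an amenable discrete group with amenable kernel forces amenability of $G$ --- which is not stated in quite that form in \cite{A-DR} and requires a careful citation or proof. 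Granting it, your approach buys more than the paper's: it yields $C^*(G(X,T))=C^*_r(G(X,T))$ for \emph{every} Exel--Vershik system with $\Gamma(M)$ amenable, with the standing hypotheses~4.1 needed only to identify the groupoid algebra with the Exel crossed product at the very end, whereas in the paper's proof the equality of the full and reduced norms itself depends on those hypotheses. Your translations of the other two conditions --- topological principality via Proposition~\ref{prp:characterise top free}, and minimality via the identification $r\bigl(G(X,T)_x\bigr)=[x]_T$ together with the standard equivalence of minimality with density of every orbit --- agree with what the paper does implicitly in the paragraph preceding the corollary.
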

\begin{proof}
The second assertion follows from Theorem~\ref{thm:main} once we show that $C^*(G(X,T)) =
C^*_r(G(X,T))$. For this let $\pi$ be the isomorphism $\pi : C(X) \rtimes_T \Gamma(M) \cong
C^*(G(X,T))$ of \cite[Theorem~6.6]{ER07}, and let $q : C^*(G(X,T)) \to C^*_r(G(X,T))$ be the
quotient map. It suffices to show that $q \circ \pi$ is injective. For this, just run the proof of
\cite[Theorem~6.6]{ER07} replacing $C^*(G(X,T))$ with $C^*_r(G(X,T))$. It is only necessary to
check that $q \circ (\pi \times \sigma)$ is injective on each graded subspace, and for this the
argument of \cite[Proposition~6.5]{ER07} suffices because the calculations in that proof involve
elements of $C_c(G(X, T))$.
\end{proof}

Amenability is irrelevant to the Steinberg algebras of Section~\ref{thm:alg_simple}. So
Exel-Vershik systems $(X, M, T)$ where $X$ is a Cantor set should provide interesting
examples of Steinberg algebras $A(G(X,T))$ for which simplicity is characterised by
Theorem~\ref{thm:alg_simple}.



\end{document}